\newtheorem{theorem}{Theorem}[section]
\newtheorem{lemma}[theorem]{Lemma}
\newtheorem{corollary}[theorem]{Corollary}
\theoremstyle{remark}
\newcommand{\bin}[2]{\mbox{$\left(#1 \atop #2\right)$}}
\newcommand{\grw}{\mbox{$-I\times_fM^n$}}
\newcommand{\pmin}{\mbox{$p_{\mathrm{min}}$}}
\newcommand{\pmax}{\mbox{$p_{\mathrm{max}}$}}
\newcommand{\hmin}{\mbox{$h_{\mathrm{min}}$}}
\newcommand{\hmax}{\mbox{$h_{\mathrm{max}}$}}
\newcommand{\Ro}{\mbox{$\overline{R}$}}
\newcommand{\Rico}{\mbox{$\overline{\mathrm{Ric}}$}}
\newcommand{\m}{\mbox{$\Sigma$}}
\newcommand{\R}[1]{\mbox{${\mathbb R}^{#1}$}}
\newcommand{\g}[2]{\mbox{$\langle #1 ,#2 \rangle$}}
\newcommand{\fle}{\mbox{$\rightarrow$}}
\newcommand{\rf}[1]{\mbox{(\ref{#1})}}
\newcommand{\rl}[1]{{~\ref{#1}}}
\newcommand{\nablao}{\mbox{$\overline{\nabla}$}}
\newcommand{\xm}{\mbox{$\mathcal{X}(\m)$}}
\def\beq{\begin{equation}}
\def\eeq{\end{equation}}
\newcommand{\x}{\mbox{$\psi:\Sigma^n\rightarrow\grw$}}
\newcommand{\xdos}{\mbox{$\psi:\Sigma^2\rightarrow -I\times_fM^2$}}
\begin{document}
\title[Uniqueness of spacelike hypersurfaces in GRW spacetimes]
{Uniqueness  of spacelike hypersurfaces with constant higher order mean curvature in
generalized Robertson-Walker spacetimes}

\author{Luis J. Al\'\i as}
\address{Departamento de Matem\'{a}ticas, Universidad de Murcia, E-30100 Espinardo, Murcia,
Spain} \email{ljalias@um.es}
\thanks{L.J. Al\'\i as was partially supported by MEC/FEDER Grant MTM2004-04934-C04-02 and
Fundaci\'{o}n S\'{e}neca, Spain.}

\author{A. Gervasio Colares}
\address{Departamento de Matem\'{a}tica, Universidade Federal do Cear,  Campus do Pici,
60455-760 Fortaleza-Ce, Brazil} \email{gcolares@mat.ufc.br}
\thanks{A.G. Colares was partially supported by FUNCAP and PRONEX, Brazil}

\subjclass[2000]{Primary 53C42; Secondary 53B30, 53C50}



\keywords{}

\begin{abstract}
In this paper we study the problem of uniqueness for spacelike hypersurfaces with constant higher order mean curvature in
generalized Robertson-Walker (GRW) spacetimes. In particular, we consider the following question: {Under what conditions must a compact spacelike hypersurface with constant higher order mean curvature in a spatially closed GRW spacetime be a spacelike slice ?} We prove that this happens, esentially, under the so called \textit{null convergence condition}. Our approach is based on the use of the Newton transformations (and their associated differential operators) and the Minkowski formulae for spacelike hypersurfaces.
\end{abstract}

\maketitle

\section{Introduction}
\label{s1}
The mathematical interest for the study of spacelike hypersurfaces in spacetimes began in the seventies, with the works of Calabi \cite{Ca}, Cheng and Yau \cite{CY}, Brill and Flaherty \cite{BF}, Choquet-Bruhat \cite{CB1,CB2}, and later on with some other authors (cf. e.g. \cite{Go,St,BS,Ge,B,BCM}).
Moreover, the study of such hypersurfaces is also of interest from a physical point of view, because of its relation to several problems in general relativity (see, for instance, \cite{CFM,CBY,MT}).
A basic question on this subject is the uniqueness of spacelike hypersurfaces with constant mean curvature in certain spacetimes (we refer the reader to the introductions of the papers \cite{ARS1} and \cite{ABC} for a brief account of it). More recently, there has been also an increasing interest in the study of uniqueness of spacelike hypersurfaces with constant higher order mean curvature, including the case of the second order mean curvature which is closely related to the intrinsic scalar curvature of the hypersurface (see Section\rl{s2}).

Conformally stationary (CS) spacetimes are time-orientable spacetimes which are equipped with a globally defined timelike conformal vector field, and they include, for instance, the family of generalized Robertson-Walker (GRW) spacetimes. By a GRW spacetime, we mean a Lorentzian warped product \grw\ with
Riemannian fibre $M^n$ and warping function $f$. In particular, when the Riemannian factor $M^n$ has constant sectional curvature then \grw\ is classically called a Robertson-Walker (RW) spacetime (for the details, see Section\rl{s2}). In a GRW spacetime, the vector field given by $K(t,x)=f(t)(\partial/\partial_t)_{(t,x)}$ defines globally a timelike conformal field, which is also closed, in the sense that its metrically equivalent 1-form is closed. As observed by Montiel in \cite{Mo}, every CS spacetime which is equipped with a closed timelike conformal vector field is locally isometric to a GRW.

In this paper, we are interested in the study of uniqueness of compact spacelike hypersurfaces with constant higher order mean curvature in GRW spacetimes. First of all, recall that if a GRW spacetime \grw\ admits a compact spacelike hypersurface, then it must be spatially closed, in the sense that the Riemannian factor $M^n$ must be compact
\cite[Proposition 3.2 (i)]{ARS1}. On the other hand, observe that, for a spatially closed GRW spacetime \grw, the family of slices $M^n_t=\{t\}\times M^n$ constitutes a foliation of \grw\ by compact totally umbilical leaves with constant mean curvature $H(t)=f'(t)/f(t)$ and, more generally, constant $k$-th mean curvature $H_k(t)=(f'(t)/f(t))^k$ (for the details, see \cite{ARS1}). Therefore, it is natural to address the following question:

\begin{quote}
\textit{Under what conditions must a compact spacelike hypersurface with constant higher order mean curvature in a spatially closed GRW spacetime be a spacelike slice $M^n_t=\{t\}\times M^n$ ?}
\end{quote}

In \cite{ARS1}, the first author, jointly with Romero and S\'{a}nchez, considered that question for the case of constant mean curvature hypersurfaces. In particular, they found that, when the ambient space obeys the so called \textit{timelike convergence condition}, then every compact spacelike hypersurface with constant mean curvature must be totally umbilical and, except in very exceptional cases, it must be a spacelike slice. As observed by those same authors in \cite{ARS2}, the weaker \textit{null convergence condition} is enough to guarantee that the hypersurface must be totally umbilical, not only for hypersurfaces into GRW spacetimes but, more generally, for hypersurfaces into CS spacetimes equipped with a timelike conformal vector field which is an eigenfield of the Ricci operator (and, in particular, for CS spacetimes
which are equipped with a closed timelike conformal vector field). Later on, Montiel \cite{Mo} considered again that question and, after a careful classification of the totally umbilical hypersurfaces with constant mean curvature, he completed the program by showing that the only compact spacelike hypersurfaces with constant mean curvature into a GRW spacetime which satisfies the null convergence condition are the spacelike slices, unless in the case where the spacetime is a de Sitter space and the hypersurface is a round umbilical sphere.

With respect to the case of hypersurfaces with constant higher order mean curvature, in \cite{Mo} Montiel also obtained a uniqueness result for hypersurfaces with constant scalar curvature (equivalently, constant second order mean curvature) into CS spacetimes with constant sectional curvature. More recently, the authors, jointly with Brasil Jr., initiated the study of hypersurfaces with constant higher order mean curvature in CS spacetimes \cite{ABC}.

In all the references above considering that question, the main tool used to extract information about the spacelike hypersurfaces and prove the results are the so called Minkowski formulae. Actually, the use of Minkowski-type integral formulae in this context was first started by Montiel in \cite{Mo1} for the study of hypersurfaces with constant mean curvature in de Sitter space, and it was continued later by Al\'\i as, Romero and S\'{a}nchez in \cite{ARS1,ARS2,ARS3} for constant mean curvature hypersurfaces in GRW spacetimes and, more generally, in CS spacetimes. Let us remark here that for the case of the mean curvature, only the first and second Minkowski formulae come into play. However, for the case of higher
order mean curvatures, one needs to use successive Minkowski formulae, which involve covariant derivatives of the Ricci tensor of the ambient spacetime. That makes the task much harder, unless one assumes that the ambient spacetime is Einstein or, even more, it has constant sectional curvature. That is the case, for instance, in \cite{AAR} where higher order Minkowski formulae are developed for spacelike hypersurfaces in de Sitter, in \cite{Mo} where the third Minkowski formula is written for spacelike hypersurfaces in a CS spacetime with constant sectional curvature, and, more generally, in \cite{ABC} where the authors, jointly with Brasil Jr., derived general Minkowski formulae for spacelike hypersurfaces in CS spacetimes with constant sectional curvature.

In this paper, which is a natural continuation of the references above, we go deeper into the study of spacelike hypersurfaces with constant higher order mean curvature in spatially closed GRW spacetimes. Our approach, which is based in the use of the so called Newton transformations $P_k$ and their associated second order differential operators $L_k$ (see Section\rl{s3}), allows us to extend the previous uniqueness results to the case where the ambient spacetimes do not have constant sectional curvature. For instance, and as a first application, in Section\rl{s5} we obtain the following (Theorem\rl{t1}):
\begin{quote}
\begin{it}
Let \grw\ a spatially closed GRW spacetime such that its warping function satisfies
\[
ff''-f'^2\leq 0,
\]
(that is, such that $-\log f$ is convex). Let $\m^n$ be a compact spacelike
hypersurface immersed into \grw\ whose $k$-th Newton transformation $P_k$
is definite on \m, for some $k=0,1\ldots, n-1$. If the quotient $H_{k+1}/H_k$ is constant, then the
hypersurface is an embedded slice $\{t_0\}\times M$, where $t_0\in I$ satisfies $f'(t_0)\neq0$ if $k\geq 1$.
\end{it}
\end{quote}
The proof of Theorem\rl{t1} is based strongly on the ellipticity of the differential operator $L_k$ (or, equivalently, the definiteness of $P_k$; for the details, see Section\rl{s3}). In Corollary\rl{co1} and Corollary\rl{co2} we apply the result above to situations where the ellipticity of $L_k$ can be derived from geometric hypothesis.

On the other hand, in Section\rl{s6} we derive general Minkowski formulae
for hypersurfaces in GRW spacetimes which can be applied to the study of
hypersurfaces with constant higher order mean curvature in arbitrary RW
spacetimes, even if the ambient spacetime does not have constant sectional
curvature. Our derivation of Minkowski formulae in this generality is
based on the use of the operators $P_k$ and $L_k$, and on the detailed
analysis of a series of terms which come into play when the ambient space
does not have constant sectional curvature. In particular, we are able to
find a neat expression for the divergence of $P_k$ (see formula
\rf{divPKfinal}) which allows us to write our Minkowski formulae in a nice
way.

As a first application of these Minkowski formulae, in Section\rl{s7} we
obtain the following uniqueness result (Theorem\rl{th}) under the
assumption of null convergence condition (let us recall that a spacetime
obeys the null convergence condition if its Ricci curvature is
non-negative on null (or lightlike) directions; obviously, every spacetime
with constant sectional curvature trivially obeys null convergence
condition):

\begin{quote}
\begin{it}
Let \grw\ be a spatially closed RW spacetime obeying the null convergence condition, with $n\geq 3$, that is
\beq
\label{NCCbis2}
\kappa\geq\sup_I(ff''-f'^2),
\eeq
where $\kappa$ is the constant sectional curvature of $M^n$. Then every compact spacelike hypersurface immersed into \grw\ with positive constant $H_2$ is totally umbilical. Moreover, \m\ must be a slice $\{t_0\}\times M^n$ (necessarily with $f'(t_0)\neq 0$), unless in the case where \grw\ has positive constant sectional curvature and \m\ is a round umbilical hypersphere. The latter case cannot occur if the inequality in \rf{NCCbis2} is strict.
\end{it}
\end{quote}
See also Theorem\rl{thbis} and Theorem\rl{thbiss} for two different versions of Theorem\rl{th} for the general case of hypersurfaces with constant higher order mean curvature $H_k$, when $k\geq 3$.

Finally, in Section\rl{s9} and as another application of the second order differential operators associated to
the Newton transformations, we obtain the following uniqueness result for hypersurfaces in GRW spacetimes (Theorem\rl{thfinal}):
\begin{quote}
\begin{it}
Let \grw\ be a spatially closed GRW spacetime obeying the strong null convergence condition, with $n\geq 3$, that is,
\beq
\label{NCCstrongbis}
K_M\geq\sup_I(ff''-f'^2),
\eeq
where $K_M$ stands for the sectional curvature of $M^n$. Assume that $\m^n$ is a compact spacelike hypersurface immersed into \grw\ which is contained in a slab
\[
\Omega(t_1,t_2)=(t_1,t_2)\times M
\]
on which $f'$ does not vanish. If $H_{k}$ is constant, with $2\leq k\leq n$ then \m\ is totally umbilical. Moreover,
\m\ must be a slice $\{t_0\}\times M^n$ (necessarily with $f'(t_0)\neq 0$), unless in the case where \grw\ has positive
constant sectional curvature and \m\ is a round umbilical hypersphere.
The latter case cannot occur if we assume that the inequality in \rf{NCCstrongbis} is strict.
\end{it}
\end{quote}
The proof of Theorem\rl{thfinal} is based strongly on the ellipticity of the differential operators associated to the Newton transformations.

\section{Preliminaries}
\label{s2} Consider $M^n$ an $n$-dimensional Riemannian manifold,
and let $I$ be a 1-dimensional manifold (either a circle or an open
interval of \R{}). We will denote by \grw\ the $(n+1)$-dimensional product
manifold $I\times M$ endowed with the Lorentzian metric
\[
\g{}{}=-dt^2+f^2(t)\g{}{}_{M},
\]
where $f>0$ is a positive smooth function on $I$, and $\g{}{}_M$
stands for the Riemannian metric on $M^n$. That is, \grw\ is
nothing but a Lorentzian warped product with Lorentzian base
$(I,-dt^2)$, Riemannian fiber $(M^n,\g{}{}_M)$, and
warping function $f$. Following the terminology introduced in \cite{ARS1},
we will refer to \grw\ as a generalized Robertson-Walker (GRW) spacetime. In particular, when the Riemannian
factor $M^n$ has constant sectional curvature then \grw\ is classically called a Robertson-Walker (RW) spacetime.

Consider a smooth immersion \x\ of an $n$-dimensional connected manifold
\m\ into a GRW spacetime, and assume that the induced metric via $\psi$ is a Riemannian metric
on \m; that is, \m\ is a spacelike hypersurface. In that case,
since
\[
\partial_t=(\partial/\partial_t)_{(t,x)}, \quad (t,x)\in\grw,
\]
is a unitary timelike vector field globally defined on the ambient GRW spacetime, then there exists a unique unitary timelike normal field $N$
globally defined on \m\ which is in the same time-orientation as
$\partial_t$, so that
\[
\g{N}{\partial_t}\leq -1<0 \quad \mathrm{on} \quad \m.
\]
We will refer to that normal field $N$ as the future-pointing Gauss map of the hypersurface. Its opposite will be refered as the past-pointing Gauss map of \m.

Let $A:\xm\rightarrow\xm$ stand for the shape operator (or Weingarten
endomorphism) of \m\ with respect to either the future or the past-pointing Gauss map $N$. As
is well known, $A$ defines a self-adjoint linear operator on each tangent
space $T_p\m$, and its eigenvalues $\kappa_1(p), \ldots, \kappa_n(p)$ are
the principal curvatures of the hypersurface. Associated to the shape
operator there are $n$ algebraic invariants given by
\[
S_k(p)=\sigma_k(\kappa_1(p), \ldots, \kappa_n(p)), \quad 1\leq k\leq n,
\]
where $\sigma_k:\mathbb R^n\to\mathbb R$ is the elementary symmetric
function in $\mathbb R^n$ given by
\[
\sigma_k(x_1,\ldots, x_n)=\sum_{i_1<\cdots<i_k}x_{i_1}\ldots x_{i_k}.
\]
Observe that the characteristic polynomial of $A$ can be writen in terms
of the $S_k$'s as \beq \label{poly}
\det(tI-A)=\sum_{k=0}^n(-1)^kS_kt^{n-k}, \eeq where $S_0=1$ by definition.
The $k$th -mean curvature $H_k$ of the hypersurface is then defined by
\[
\bin{n}{k}H_k=(-1)^kS_k=\sigma_k(-\kappa_1,\ldots,-\kappa_n),
\]
for every $0\leq k\leq n$. In particular, when $k=1$
\[
H_1=-\frac{1}{n}\sum_{i=1}^n\kappa_i=-\frac{1}{n}\mathrm{trace}(A)=H
\]
is nothing but the mean curvature of \m, which is the main extrinsic
curvature of the hypersurface. The choice of the sign $(-1)^k$ in our
definition of $H_k$ is motivated by the fact that in that case the mean
curvature vector is given by $\overrightarrow{H}=HN$. Therefore, $H(p)>0$
at a point $p\in\m$ if and only if $\overrightarrow{H}(p)$ is in the same time-orientation as $N$.
Obviously, when $k$ is even the sign of $H_k$ does not depend on the chosen Gauss map. Even more, when $k$ is even it follows from the Gauss equation of the hypersurface that the value of $H_k$ is a geometric quantity which is related to the intrinsic curvature of \m. For instance, for $k=2$ one gets that
\beq
\label{scalar}
n(n-1)H_2=\bar{S}-S+2\Rico(N,N),
\eeq
where $S$ and $\bar{S}$ are, respectively, the scalar curvature of \m\ and \grw, and $\Rico$ stands for the Ricci tensor of the ambient GRW spacetime.

\section{The Newton transformations and their associated differential operators}
\label{s3}
The classical Newton transformations $P_k:\xm\rightarrow\xm$ are defined
inductively from $A$ by
\[
P_0=I \quad \mathrm{and} \quad P_k=\bin{n}{k}H_kI+A\circ P_{k-1},
\]
for every $k=1\ldots,n$, where $I$ denotes the identity in \xm.
Equivalently,
\[
P_k=
\sum_{j=0}^k \bin{n}{j}H_j A^{k-j}.
\]
Note that by the Cayley-Hamilton theorem, we have $P_n=0$ from \rf{poly}. Observe also that when $k$ is even, the definition of $P_k$ does not depend on the choosen Gauss map, but when $k$ is odd there is a change of sign in the definition of $P_k$.

Let us recall that each $P_k(p)$ is also a self-adjoint linear operator on
each tangent space $T_p\m$ which commutes with $A(p)$. Indeed, $A(p)$ and
$P_k(p)$ can be simultaneously diagonalized: if $\{ e_1, \ldots, e_n\}$
are the eigenvectors of $A(p)$ corresponding to the eigenvalues
$\kappa_1(p), \ldots, \kappa_n(p)$, respectively, then they are also the
eigenvectors of $P_k(p)$ with corresponding eigenvalues given by
\[
\mu_{i,k}(p)=(-1)^k\frac{\partial \sigma_{k+1}}{\partial x_i}
(\kappa_1(p), \ldots, \kappa_n(p)) =(-1)^{k}\sum_{i_1<\cdots<i_k,i_j\neq
i}\kappa_{i_1}(p)\cdots\kappa_{i_k}(p),
\]
for every $1\leq i\leq n$. From here it can be easily seen that
\beq
\label{trPk}
\mathrm{trace}(P_k)=c_kH_k, 
\eeq
\beq
\label{trAPk}
\mathrm{trace}(A\circ P_k)=-c_kH_{k+1}, 
\eeq
and
\beq
\label{trA2Pk}
\mathrm{trace}(A^2\circ P_k)=\bin{n}{k+1}(nH_1H_{k+1}-(n-k-1)H_{k+2}),
\eeq
where $$c_k=(n-k)\bin{n}{k}=(k+1)\bin{n}{k+1},$$
and $H_{k}=0$ if $k>n$. We refer the reader
to \cite{ABC} for further details (see also \cite{Re} and \cite{Ro} for other details about classical Newton tranformations for hypersurfaces in Riemannian spaces).

Let $\nabla$ stand for the Levi-Civita connection of \m. Associated to
each Newton transformation $P_k$, we consider the second order linear
differential operator
$L_k:\mathcal{C}^\infty(\m)\fle\mathcal{C}^\infty(\m)$ given by
\[
L_k(f)=\mathrm{trace}(P_k\circ\nabla^2f).
\]
Here $\nabla^2f:\mathcal{X}(\m)\fle\mathcal{X}(\m)$ denotes the
self-adjoint linear operator metrically equivalent to the hessian of $f$, and it
is given by
\[
\g{\nabla^2f(X)}{Y}=\g{\nabla_X(\nabla f)}{Y}, \quad X,Y\in\xm.
\]

Observe that
\begin{eqnarray*}
L_k(f) & = & \mathrm{trace}(P_k\circ\nabla^2f)=
\sum_{i=1}^n\g{P_k(\nabla_{E_i}\nabla f)}{E_i}\\ {} & = &
\sum_{i=1}^n\g{\nabla_{E_i}\nabla f}{P_k(E_i)}=
\sum_{i=1}^n\g{\nabla_{P_k(E_i)}\nabla f}{E_i}=
\mathrm{trace}(\nabla^2f\circ P_k),
\end{eqnarray*}
where $\{ E_1, \ldots, E_n \}$ is a local orthonormal frame on \m.
Moreover, we also have that
\begin{eqnarray}
\label{divPk}
\nonumber \mathrm{div}(P_k(\nabla f)) & = &
\sum_{i=1}^n\g{(\nabla_{E_i}P_k)(\nabla f)}{E_i}+\sum_{i=1}^n\g{P_k(\nabla_{E_i}\nabla f)}{E_i}\\
{} & = & \g{\mathrm{div}P_k}{\nabla f}+L_k(f),
\end{eqnarray}
where
\[
\mathrm{div}P_k:=\mathrm{trace}(\nabla
P_k)=\sum_{i=1}^n(\nabla_{E_i}P_k)(E_i).
\]

In particular, if $P_k$ is divergence-free then
$L_k(f)=\mathrm{div}(P_k(\nabla f))$ and $L_k$ is a divergence form differential operator on \m. This happens trivially when $k=0$, and actually $L_0$ is nothing but the Laplacian operator $\Delta$. On the other
hand, by \cite[Corollary 3.2]{ABC} this also happens for every $k=0, \ldots,n$
when the GRW ambient spacetime has constant sectional curvature. This is a consequence of the following useful expression for the divergence of the Newton operators, which can be found in \cite[Lemma 3.1]{ABC}.
\begin{lemma}
\label{lemadivPk}
The divergences of the Newton transformations $P_k$ of a spacelike hypersurface $\m^n$ immersed into an arbitrary $(n+1)$-dimensional spacetime are given by
\[
\g{\mathrm{div}P_k}{X}=\sum_{j=0}^{k-1}\sum_{i=1}^{n}\g{\Ro(E_i,A^{k-1-j}X)N}{P_jE_i}, \quad k=1,\ldots,n-1,
\]
for every tangent field $X\in\xm$, where $\Ro$ stands for the curvature tensor of the ambient spacetime\footnote{In our
notation and following \cite{ONe}, we use $\overline{R}(X,Y)Z=\nablao_{[X,Y]}Z-[\nablao_X,\nablao_Y]Z$}.
\end{lemma}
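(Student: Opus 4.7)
The plan is induction on $k\geq 1$, with the main ambient input being the Codazzi equation
\[
(\nabla_X A)Y-(\nabla_Y A)X = (\Ro(X,Y)N)^{\top},
\]
where $\top$ denotes tangential projection. A Leibniz expansion of the recursion $P_k=\binom{n}{k}H_k I+A\circ P_{k-1}$ gives
\[
(\nabla_Y P_k)(X) = \binom{n}{k}(YH_k)X+(\nabla_Y A)(P_{k-1}X)+A\bigl((\nabla_Y P_{k-1})(X)\bigr),
\]
after the terms $P_k\nabla_Y X$ cancel by the recursion applied to $\nabla_Y X$. Setting $Y=E_i$, pairing with $X$ and summing yields the three-term decomposition
\[
\g{\mathrm{div}\,P_k}{X}=\binom{n}{k}\g{\nabla H_k}{X}+\sum_i \g{(\nabla_{E_i}A)(P_{k-1}E_i)}{X}+\g{\mathrm{div}\,P_{k-1}}{AX}.
\]

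For the base case $k=1$ one has $P_0=I$ and $nH_1=-\mathrm{trace}(A)$; the standard traced Codazzi computation then gives $\g{\mathrm{div}\,A}{X}=X(\mathrm{trace}(A))+\sum_i\g{\Ro(E_i,X)N}{E_i}$, so the gradient terms in $\mathrm{div}\,P_1=n\nabla H_1+\mathrm{div}\,A$ cancel, leaving exactly the single $j=0$ summand of the target formula. For the inductive step, the inductive hypothesis applied to $AX$ turns $\g{\mathrm{div}\,P_{k-1}}{AX}$ into the contributions $j=0,\ldots,k-2$ of the target formula (via $A^{(k-1)-1-j}AX=A^{k-1-j}X$), so it remains to show that the first two terms in the decomposition produce the missing $j=k-1$ contribution:
\[
\binom{n}{k}\g{\nabla H_k}{X}+\sum_i \g{(\nabla_{E_i}A)(P_{k-1}E_i)}{X}=\sum_i\g{\Ro(E_i,X)N}{P_{k-1}E_i}.
\]

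To derive this identity, I apply Codazzi twice to the left-hand sum: first $(\nabla_{E_i}A)(P_{k-1}E_i)=(\nabla_{P_{k-1}E_i}A)(E_i)+\Ro(E_i,P_{k-1}E_i)N$, in which the second piece vanishes pointwise because $\sum_i\Ro(E_i,\mu_{i,k-1}E_i)N=0$ in an $A$-eigenbasis; and then, after using self-adjointness of $\nabla_Z A$ to rewrite $\g{(\nabla_{P_{k-1}E_i}A)(E_i)}{X}=\g{E_i}{(\nabla_{P_{k-1}E_i}A)(X)}$, Codazzi again on $(\nabla_{P_{k-1}E_i}A)(X)$. Combined with the pair-swap symmetry $\g{\Ro(X,Y)Z}{W}=\g{\Ro(Z,W)X}{Y}$, this rewrites the left-hand sum as $\mathrm{trace}(P_{k-1}\,\nabla_X A)+\sum_i\g{\Ro(E_i,X)N}{P_{k-1}E_i}$. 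The argument closes via the key trace identity
\[
\mathrm{trace}(P_{k-1}\,\nabla_X A)=-\binom{n}{k}X(H_k),
\]
which follows from differentiating $\mathrm{trace}(A\circ P_{k-1})=-k\binom{n}{k}H_k$ and using that $P_{k-1}$ is, up to sign, the matrix derivative of $S_k$, equivalently from the adjugate expansion $\mathrm{adj}(tI-A)=\sum_{\ell=0}^{n-1}P_\ell\,t^{n-1-\ell}$.

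The main obstacle is the bookkeeping in this last step: after the two Codazzi substitutions one must carefully exploit the curvature symmetries to convert sums of the form $\sum_i\g{E_i}{\Ro(P_{k-1}E_i,X)N}$ into the target form $\sum_i\g{\Ro(E_i,X)N}{P_{k-1}E_i}$, and simultaneously invoke the trace identity to cancel the gradient term on the left. The ingredients are all natural, but they require simultaneous tracking of $A$, $P_{k-1}$, $\nabla A$, and the symmetries of $\Ro$.
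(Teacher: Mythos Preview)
The paper does not supply its own proof of this lemma; it is quoted verbatim from \cite[Lemma~3.1]{ABC}. Your inductive scheme---differentiate the recursion $P_k=\binom{n}{k}H_kI+A\circ P_{k-1}$, feed in Codazzi twice, and close with the trace identity $\mathrm{trace}(P_{k-1}\circ\nabla_X A)=-\binom{n}{k}X(H_k)$---is correct and is the standard argument (the trace identity itself is equation~(3.6) of \cite{ABC}, which the present paper also invokes later in Section~\ref{s8}).

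One small remark on the bookkeeping you flag as delicate: after the second Codazzi substitution you need
\[
\sum_i\g{E_i}{\Ro(P_{k-1}E_i,X)N}=\sum_i\g{\Ro(E_i,X)N}{P_{k-1}E_i},
\]
and the pair-swap symmetry of $\Ro$ is not quite the right tool here. What actually does the job is the self-adjointness of $P_{k-1}$: writing $P_{k-1}E_i=\sum_j p_{ij}E_j$ with $p_{ij}=p_{ji}$ and relabelling, or simply evaluating in an $A$-eigenbasis as you already did for the first curvature term, gives the identity immediately. With that adjustment your outline is complete.
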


It follows from \rf{divPk} that the operator $L_k$ is elliptic if and only if $P_k$ is positive definite.
Clearly, $L_0=\Delta$ is always elliptic. For our applications, it will be useful to have some geometric conditions which guarantee the ellipticity of $L_k$ when $k\geq 1$. For $k=1$ the following one is esentially a consequence of Lemma 3.10 in \cite{El}.
\begin{lemma}
\label{lemafernanda}
Let \m\ be a spacelike hypersurface immersed into a GRW spacetime. If $H_2>0$ on \m, then $L_1$ is elliptic or, equivalently, $P_1$ is positive definite (for an appropriate choice of the Gauss map $N$).
\end{lemma}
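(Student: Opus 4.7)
The plan is to compute the eigenvalues of $P_1$ explicitly and to show that, after a suitable choice of orientation, all of them are strictly positive. Recall that $P_1=nH_1I+A$, so its eigenvalues at a point are
\[
\mu_{i,1}=nH_1+\kappa_i,\quad i=1,\ldots,n,
\]
using that $\mathrm{trace}(A)=-nH_1$ in the sign convention of Section\rl{s2}. Thus proving $L_1$ elliptic amounts to showing $\mu_{i,1}>0$ for every $i$ after choosing $N$ appropriately.

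First I would fix the orientation. Newton's identity $S_1^2=\mathrm{trace}(A^2)+2S_2$, rewritten in the paper's notation, gives
\[
\mathrm{trace}(A^2)=n^2H_1^2-n(n-1)H_2.
\]
Since $\mathrm{trace}(A^2)\geq 0$, the hypothesis $H_2>0$ forces $H_1^2>\frac{n-1}{n}H_2>0$, so $H_1$ vanishes nowhere on \m. Because \m\ is connected (as in Section\rl{s2}), $H_1$ has constant sign; replacing $N$ by $-N$ if necessary, we may assume $H_1>0$ on \m\ (note that $H_2$ is unchanged by this flip, since $k=2$ is even).

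With this orientation, each principal curvature obeys
\[
\kappa_i^2\leq \mathrm{trace}(A^2)=n^2H_1^2-n(n-1)H_2<n^2H_1^2,
\]
hence $|\kappa_i|<nH_1$, which gives $\mu_{i,1}=nH_1+\kappa_i>0$ for every $i$. Therefore $P_1$ is positive definite and, by \rf{divPk} together with the remark that $L_k$ is elliptic iff $P_k$ is definite, $L_1$ is elliptic.

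No real obstacle arises: the argument is a purely pointwise algebraic consequence of Newton's inequality $H_1^2\geq H_2$, and the only global ingredient is the use of connectedness of \m\ to pick the orientation so that $H_1>0$ everywhere simultaneously. In particular, no assumption on the GRW ambient is used here, which explains why the result, essentially due to Elbert \cite{El}, carries over verbatim from the Riemannian to the Lorentzian setting.
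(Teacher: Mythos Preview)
Your proof is correct and follows essentially the same approach as the paper's own proof: both use the identity $n^2H_1^2=\sum_i\kappa_i^2+n(n-1)H_2$ to conclude that $H_1$ never vanishes (so one may choose $N$ with $H_1>0$) and that $\kappa_i^2<n^2H_1^2$, hence $\mu_{i,1}=nH_1+\kappa_i>0$. The only cosmetic difference is that the paper phrases the nonvanishing of $H_1$ via the Cauchy--Schwarz inequality $H_1^2\geq H_2$, whereas you read it off directly from the trace identity; your explicit appeal to connectedness of \m\ for the global choice of sign is a welcome clarification.
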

\begin{proof}
Simple observe that by Cauchy-Schwarz inequality we have $H_1^2\geq H_2>0$, and $H_1$ does not vanish on \m. By choosing the appropriate Gauss map, we may assume that $H_1>0$. Recall that $H_2$ does not depend on the chosen $N$. Since $n^2H_1^2=\sum\kappa_i^2+n(n-1)H_2>\kappa_i^2$ for every $i=1,\ldots,n$, then
$\mu_{i,1}=nH_1+\kappa_i>0$ for every $i$ and $P_1$ is positive definite.
\end{proof}

On the other hand, by an elliptic point in a spacelike hypersurface we mean a point of \m\ where all the principal curvatures are negative, with respect to an appropriate choice of the Gauss map $N$. When $2\leq k\leq n-1$, the existence of an elliptic point in \m\ also implies here that the operator $L_k$ is elliptic on \m, under the assumption that $H_{k+1}$ is positive on \m\ for that choice of $N$ (if $k$ is even). Even more, we have the following result.
\begin{lemma}
\label{lemaCR}
Let \m\ be a spacelike hypersurface immersed into a GRW spacetime. If there exists an elliptic point of \m, with respect to an appropriate choice of the Gauss map $N$, and $H_{k+1}>0$ on \m, for $2\leq k\leq n-1$, then for
all $1\leq j\leq k$ the operator $L_j$ is elliptic or, equivalently, $P_j$ is positive definite (for that appropriate choice of the Gauss map, if $j$ is odd).
\end{lemma}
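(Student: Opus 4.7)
The plan is to propagate positivity from the elliptic point to the whole hypersurface using the structure of G\aa rding's cone. With the Gauss map chosen so that all principal curvatures $\kappa_i(p_0)$ are negative at the elliptic point $p_0$, set $\lambda_i:=-\kappa_i$, so that $\lambda(p_0)$ lies in the positive cone $\{\lambda\in\R{n}:\lambda_i>0\ \forall i\}$. In particular $H_j(p_0)>0$ for every $j$ and $\lambda(p_0)$ belongs to the G\aa rding cone
\[
\Gamma_{k+1}^+=\{\lambda\in\R{n}:\sigma_j(\lambda)>0,\ j=1,\ldots,k+1\}.
\]

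The key fact I would use from the theory of hyperbolic polynomials is that $\Gamma_{k+1}^+$ is the connected component of the open set $\{\sigma_{k+1}>0\}$ which contains the positive cone; in particular it is both open and relatively closed inside $\{\sigma_{k+1}>0\}$. I would then consider
\[
U=\{p\in\m:\lambda(p)\in\Gamma_{k+1}^+\},
\]
which is nonempty (contains $p_0$) and open by continuity of the elementary symmetric functions of the principal curvatures. To see that $U$ is closed in \m, take a limit point $p$: continuity places $\lambda(p)$ in the closure of $\Gamma_{k+1}^+$ in $\R{n}$, while the hypothesis $H_{k+1}>0$ on \m\ gives $\sigma_{k+1}(\lambda(p))>0$, so the closed-component property puts $\lambda(p)$ back into $\Gamma_{k+1}^+$. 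Connectedness of \m\ then forces $U=\m$, and so $H_1,\ldots,H_{k+1}$ are all positive and $\lambda(p)\in\Gamma_{k+1}^+$ for every $p$.

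With this in hand, positive definiteness of $P_j$ for $1\leq j\leq k$ follows from the formula
\[
\mu_{i,j}=\sigma_j(\lambda|i)=\frac{\partial\sigma_{j+1}}{\partial\lambda_i}(\lambda)
\]
for its eigenvalues (where $\lambda|i$ denotes $\lambda$ with the $i$-th entry deleted), combined with the classical G\aa rding fact that $\partial\sigma_{j+1}/\partial\lambda_i>0$ on $\Gamma_{j+1}^+$ and the nesting $\Gamma_{k+1}^+\subset\Gamma_{j+1}^+$ for $j\leq k$. Ellipticity of $L_j$ is then immediate from \rf{divPk}.

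The main obstacle is the clean invocation of these two G\aa rding properties (the component-closedness of $\Gamma_{k+1}^+$ in $\{\sigma_{k+1}>0\}$ and the positivity of the partial derivatives on it), since they belong to the theory of hyperbolic polynomials (G\aa rding; Caffarelli--Nirenberg--Spruck) and are not developed earlier in the paper. Care with signs is also needed: the substitution $\lambda=-\kappa$ absorbs the $(-1)^k$ in the definition of $H_k$, and the Gauss map must remain fixed throughout, since flipping $N$ reverses signs of $P_j$ for odd $j$---which is exactly the parenthetical in the statement.
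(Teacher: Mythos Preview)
Your argument is correct and is precisely the G\aa rding-cone connectedness argument that underlies the references \cite{CR} and \cite{BC} to which the paper defers for its proof. The paper itself gives no details beyond those citations and the sign-convention remark $\bin{n}{j}H_j=\sigma_j(-\kappa_1,\ldots,-\kappa_n)$, so your write-up is in fact more explicit than what appears in the text.
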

The proof follows from that of \cite[Proposition 3.2]{CR} (see also the proof of \cite[Proposition 3.2]{BC}), taking into account that in our case, and by our sign convention in the definition of the $j$-th mean curvatures, we have $\bin{n}{j}H_j=\sigma_j(-\kappa_1,\ldots,-\kappa_n)=(-1)^jS_j$.

\section{The operator $L_k$ acting on the height function}

The vector field given by $$K(t,x)=f(t)(\partial/\partial t)_{(t,x)}, \quad (t,x)\in\grw,$$
determines a non-vanishing future-pointing closed conformal vector field on \grw. In fact,
\beq
\label{nablaoK}
\nablao_ZK=f'(t)Z
\eeq
for every vector $Z$ tangent to \grw\ at a point $(t,x)$, where $\nablao$
denotes the Levi-Civita connection on \grw. This conformal field $K$ will
be an essential tool in our computations.

Let \x\ be a spacelike hypersurface with Gauss map $N$. The height function of \m, denoted by $h$, is the
restriction of the projection $\pi_I(t,x)=t$ to \m, that is, $h:\m\rightarrow I$ is given by $h=\pi_I\circ\psi$.
Observe that the gradient of $\pi_I$ on \grw\ is given by
\[
\nablao\pi_I=-\g{\nablao\pi_I}{\partial_t}\partial_t=-\partial_t.
\]
Then, the gradient of $h$ on \m\ is given by
\beq
\label{gradh}
\nabla h=(\nablao\pi_I)^\top=-\partial_t^\top,
\eeq
where
\[
\partial_t=\partial_t^\top-\g{N}{\partial_t}N
\]
and $\partial_t^\top\in\xm$ denotes the tangential component of
$\partial_t$. In our computations, we will also consider the function
$g(h)$, where $g:I\rightarrow\R{}$ is an arbitrary primitive of $f$. Since
$g'=f>0$, then $g(h)$ can be thought as a reparametrization of the height
function. In particular, the gradient of $g(h)$ on \m\ is
\beq
\label{gradg(h)}
\nabla g(h)=f(h)\nabla h=-f(h)\partial_t=-K^\top,
\eeq
where $K^\top$ denotes the tangential component of $K$ along the hypersurface,
\beq
\label{KT}
K=K^\top-\g{N}{K}N.
\eeq

Equation \rf{nablaoK} implies that
\beq
\label{nablaK}
\nablao_XK=f'(h)X
\eeq
for every $X\in\xm$. From here, we can easily see that
\beq
\label{nablaKT}
\nabla_XK^\top=f'(h)X-f(h)\g{N}{\partial_t}AX=f'(h)X-\g{N}{K}AX.
\eeq
Therefore, from \rf{gradg(h)} we get that
\beq
\label{hessg(h)}
\nabla _X(\nabla g(h))=-\nabla_XK^\top=-f'(h)X+\g{N}{K}AX,
\eeq
and then, by \rf{trPk} and \rf{trAPk}, we conclude that
\[
L_k(g(h))=-f'(h)\mathrm{tr}(P_k)+\g{N}{K}\mathrm{tr}(A\circ P_k)
=-c_k(f'(h)H_k+\g{N}{K}H_{k+1}).
\]

On the other hand, taking into account that
\[
\nabla h=\frac{1}{f(h)}\nabla g(h),
\]
we also get from \rf{hessg(h)} that
\beq
\label{hessh}
\nabla_X(\nabla h)=-(\log f)'(h)(X+\g{\nabla h}{X}\nabla
h)+\g{N}{\partial_t}AX,
\eeq
and therefore
\begin{eqnarray*}
L_k(h) & = & -(\log f)'(h)(\mathrm{tr}(P_k)+\g{P_k(\nabla h)}{\nabla h})
+\g{N}{\partial_t}\mathrm{tr}(A\circ P_k)\\
{} & = &-(\log f)'(h)(c_kH_k+\g{P_k(\nabla h)}{\nabla h})
-\g{N}{\partial_t}c_kH_{k+1}.
\end{eqnarray*}
For future reference, we summarize this in the following lemma.
\begin{lemma}
\label{lemma3.1}
Let \x\ be a spacelike hypersurface immersed into a GRW spacetime, with Gauss map $N$.
Let $h=\pi_I\circ\psi$ denote the height function of \m, and let $g:I\fle\R{}$ be any primitive
of $f$. Then, for every $k=0,\ldots, n-1$ we have
\beq
\label{Lk(h)}
L_k(h)=
-(\log f)'(h)(c_kH_k+\g{P_k(\nabla h)}{\nabla h})-\g{N}{\partial_t}c_kH_{k+1},
\eeq
and
\beq
\label{Lk(g)}
L_k(g(h))=
-c_k(f'(h)H_k+\g{N}{K}H_{k+1}),
\eeq
where $$c_k=(n-k)\bin{n}{k}=(k+1)\bin{n}{k+1}.$$
\end{lemma}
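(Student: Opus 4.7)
The plan is to derive both formulas by computing the relevant Hessians of $h$ and $g(h)$ on $\Sigma$ and then tracing against $P_k$ via the identities \rf{trPk} and \rf{trAPk}. The starting point is the identification of the gradients. Since $\partial_t = -\nablao\pi_I$ on $\grw$ and $h=\pi_I\circ\psi$, one has $\nabla h=-\partial_t^\top$, and consequently $\nabla g(h)=f(h)\nabla h=-K^\top$, as in \rf{gradh}--\rf{gradg(h)}. The essential structural fact to exploit is that $K$ is closed and conformal, so $\nablao_X K=f'(h)X$ for every tangent $X$ to $\grw$; this is the identity that will make the Hessian computation clean.

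Next I would compute the Hessian of $g(h)$ on $\Sigma$. Splitting $K=K^\top-\g{N}{K}N$ and applying the Weingarten formula yields
\[
\nabla_X K^\top=f'(h)X-\g{N}{K}AX,
\]
so that
\[
\nabla_X\nabla g(h)=-f'(h)X+\g{N}{K}AX.
\]
Tracing this self-adjoint operator against $P_k$ and using $\mathrm{tr}(P_k)=c_kH_k$ together with $\mathrm{tr}(A\circ P_k)=-c_kH_{k+1}$ gives immediately the formula \rf{Lk(g)} for $L_k(g(h))$.

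For the formula \rf{Lk(h)} I would not redo everything from scratch, but pass from $g(h)$ to $h$ via $\nabla h=(1/f(h))\nabla g(h)$. The product rule produces an extra term along $\nabla h$, namely the $-(\log f)'(h)\g{\nabla h}{X}\nabla h$ contribution, while the remaining pieces become $-(\log f)'(h)X+\g{N}{\partial_t}AX$ after dividing by $f(h)$. This gives the Hessian
\[
\nabla_X\nabla h=-(\log f)'(h)\bigl(X+\g{\nabla h}{X}\nabla h\bigr)+\g{N}{\partial_t}AX,
\]
and tracing against $P_k$ yields \rf{Lk(h)}, where the extra $\g{P_k(\nabla h)}{\nabla h}$ term arises from the rank-one piece $\nabla h\otimes\nabla h$ that was absent in the $g(h)$ computation.

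I do not expect a genuine obstacle here: once one spots that $K$ is a closed conformal vector field, the Hessian of $g(h)$ is forced to be a combination of the identity and $A$, and the $P_k$-trace identities do the rest. The only place to be careful is keeping track of the sign of $\g{N}{\partial_t}$ and the extra $\nabla h\otimes\nabla h$ term when passing from $g(h)$ to $h$; this is a routine application of the chain rule but is the one spot where a sign or a missing term could sneak in.
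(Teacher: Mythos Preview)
Your proposal is correct and follows essentially the same route as the paper: compute the Hessian of $g(h)$ first via the closed conformal identity $\nablao_X K=f'(h)X$ and the Weingarten formula, trace against $P_k$ using \rf{trPk} and \rf{trAPk} to get \rf{Lk(g)}, and then pass to $h$ via $\nabla h=(1/f(h))\nabla g(h)$ and the chain rule to pick up the extra $\g{P_k(\nabla h)}{\nabla h}$ term in \rf{Lk(h)}. The order of the computations and the bookkeeping you outline match the paper's argument in Section~4 line by line.
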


\section{First applications}
\label{s5}
As a first application of Lemma\rl{lemma3.1} we will prove Theorem\rl{t1} below, which extends Theorem 7 in \cite{Mo} (see also Theorem 1 in \cite{AM}) for hypersurfaces with constant mean curvature to the case of hypersurfaces for which the quotient $H_{k+1}/H_k$ is constant, where $k=0,1,\ldots,n-1$. Before stating our result, recall from \cite[Proposition 3.2 (i)]{ARS1} that if a GRW spacetime admits a compact spacelike hypersurface, then the Riemannian factor $M^n$ is necessarily compact. In that case, it is said that \grw\ is a spatially closed GRW spacetime.
\begin{theorem}
\label{t1} Let \grw\ a spatially closed GRW spacetime such that its warping function satisfies the following condition
\beq
\label{logf}
ff''-f'^2\leq 0,
\eeq
(that is, such that $-\log f$ is convex). Let $\m^n$ be a compact spacelike
hypersurface immersed into \grw\ whose $k$-th Newton transformation $P_k$
is definite on \m, for some $k=0,1\ldots, n-1$. If the quotient $H_{k+1}/H_k$ is constant, then the
hypersurface is an embedded slice $\{t_0\}\times M$, where $t_0\in I$ satisfies $f'(t_0)\neq0$ if $k\geq 1$.
\end{theorem}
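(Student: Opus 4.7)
The plan is to run a two-stage maximum principle argument on the height function $h=\pi_I\circ\psi$ and on its reparametrization $g(h)$, using the two identities of Lemma~\rl{lemma3.1}. Set $c=H_{k+1}/H_k$, and note that, since $P_k$ is definite, we may (after replacing $N$ by $-N$ if $k$ is odd, or working with $-L_k$ in place of $L_k$ if $k$ is even and $P_k$ is negative definite) assume that $P_k$ is positive definite, so that $L_k$ is elliptic and $H_k>0$ on \m.

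The first stage evaluates \rf{Lk(h)} at the maximum $p_{\max}$ and the minimum $p_{\min}$ of $h$. At such a critical point $\nabla h=0$, which via \rf{gradh} and \rf{KT} forces $\langle N,\partial_t\rangle=-1$; hence, together with $H_{k+1}=cH_k$,
\[
L_k(h)(p)=c_kH_k(p)\bigl(c-(\log f)'(h(p))\bigr),\qquad p\in\{p_{\max},p_{\min}\}.
\]
Since $P_k$ is positive definite and $\nabla^2h$ is negative (resp.\ positive) semidefinite at $p_{\max}$ (resp.\ $p_{\min}$), the maximum principle yields $L_k(h)(p_{\max})\le 0\le L_k(h)(p_{\min})$, so that $(\log f)'(h(p_{\min}))\le c\le(\log f)'(h(p_{\max}))$. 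The convexity hypothesis $ff''-f'^2\le 0$ is exactly $(\log f)''\le 0$, so $(\log f)'$ is nonincreasing on $I$ and therefore $(\log f)'(h(p_{\max}))\le(\log f)'(h(p_{\min}))$. Combining the two chains forces equality throughout: $(\log f)'\equiv c$ on the whole range $[h(p_{\min}),h(p_{\max})]$ of $h$, equivalently $f'(h)=cf(h)$ pointwise on \m.

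The second stage plugs this identity into \rf{Lk(g)}, which together with $\langle N,K\rangle=f(h)\langle N,\partial_t\rangle$ collapses to
\[
L_k(g(h))=-c_k\,c\,H_k\,f(h)\,\bigl(1+\langle N,\partial_t\rangle\bigr).
\]
Since $c_kH_kf(h)>0$ and $1+\langle N,\partial_t\rangle\le 0$, the right-hand side has a constant sign on \m\ (nonnegative if $c\ge 0$, nonpositive if $c\le 0$). Because $L_k$ is elliptic without zeroth-order term and \m\ is compact without boundary, Hopf's strong maximum principle forces $g(h)$ to be constant; as $g'=f>0$, $h$ is constant, say $h\equiv t_0$, so $\psi(\m)\subset\{t_0\}\times M$, and a standard covering argument identifies \m\ with the embedded slice $\{t_0\}\times M$. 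When $k\ge 1$, the definiteness of $P_k$ forces $H_k(t_0)=(f'(t_0)/f(t_0))^k\neq 0$, hence $f'(t_0)\neq 0$.

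The crux of the argument is the rigidity step that closes the first stage: the maximum principle alone yields information only at the two extreme values of $h$, and it is precisely the convexity hypothesis which, via the monotonicity of $(\log f)'$, allows one to propagate that pointwise data to an identity on all of \m. Without this propagation $L_k(g(h))$ has no definite sign and the second maximum principle in the third paragraph cannot be closed; once the identity $f'(h)=cf(h)$ is available, the rest of the proof is a clean application of the ellipticity of $L_k$.
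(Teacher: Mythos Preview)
Your argument is correct and essentially identical to the paper's: your first stage reproduces Lemma\rl{l1} (the inequalities \rf{eq1}--\rf{eq2}), and your second stage is exactly the paper's proof of Theorem\rl{t1}, with the same reduction of \rf{Lk(g)} to a one-signed quantity and the same appeal to the maximum principle for the elliptic operator $L_k$. One small bookkeeping point: if for odd $k$ you flip $N$ to force $P_k>0$, you may end up with the past-pointing Gauss map, for which $\langle N,\partial_t\rangle=+1$ (not $-1$) at critical points and $1+\langle N,\partial_t\rangle\geq 0$; the paper avoids this nuisance by fixing $N$ future-pointing throughout and simply working with $-L_k$ whenever $P_k$ is negative definite, regardless of parity.
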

As observed by Montiel in \cite{Mo}, condition \rf{logf} is a reasonably weak condition on the warping function which is sufficient in order to obtain uniqueness results for spacelike hypersurfaces in GRW spacetimes. For instance, every GRW spacetime obeying the timelike convergence condition satisfies \rf{logf}.

Before giving the proof of Theorem\rl{t1}, it is interesting to obtain some applications of it for situations where the definiteness of $P_k$ can be derived from geometric hypothesis. For instance, from Lemma\rl{lemafernanda} we have the following.
\begin{corollary}
\label{co1}
Let \grw\ a spatially closed GRW spacetime such that its warping function satisfies the condition
\[
ff''-f'^2\leq 0.
\]
Then the only compact spacelike hypersurfaces \m\ immersed into \grw\ such that $H_2>0$ on \m\ and the quotient $H_2/H_1$ is constant are the embedded slices $\{t_0\}\times M$, with $t_0\in I$ satisfying
$f'(t_0)\neq0$.
\end{corollary}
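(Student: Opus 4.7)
The plan is to derive Corollary~\ref{co1} as an immediate specialization of Theorem~\ref{t1} to the case $k=1$, using Lemma~\ref{lemafernanda} to translate the curvature hypothesis $H_2>0$ into the required definiteness of the Newton transformation $P_1$.

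First I would observe that the warping function assumption of the corollary is identical to condition \rf{logf} of Theorem~\ref{t1}, so the only thing left to check in order to invoke Theorem~\ref{t1} with $k=1$ is that $P_1$ is definite on \m. But this is precisely the content of Lemma~\ref{lemafernanda}: the hypothesis $H_2>0$ on \m\ forces $H_1^2\geq H_2>0$ by the classical Newton--Maclaurin inequality, so (after choosing the future- or past-pointing Gauss map $N$ that makes $H_1$ positive) the eigenvalues $\mu_{i,1}=nH_1+\kappa_i$ of $P_1$ satisfy $\mu_{i,1}>0$ for all $i$, hence $P_1$ is positive definite. Note that $H_2$ being even does not depend on the chosen Gauss map, so switching the orientation of $N$ (which only changes the sign of $H_1$) is legitimate and does not alter the hypothesis.

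Having verified the hypotheses of Theorem~\ref{t1}, the constancy of the quotient $H_2/H_1=H_{k+1}/H_k$ for $k=1$ (well defined because $H_1$ does not vanish) triggers the conclusion of that theorem: \m\ must be an embedded slice $\{t_0\}\times M$, and since $k=1\geq 1$ we moreover obtain $f'(t_0)\neq 0$, which is exactly the statement of the corollary.

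There is really no main obstacle to overcome, since the corollary is a packaging of two previously established results; the only delicate point is the sign/orientation discussion in the application of Lemma~\ref{lemafernanda}, which must be handled before quoting Theorem~\ref{t1} so that the definiteness of $P_1$ is unambiguous.
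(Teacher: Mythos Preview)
Your proposal is correct and follows exactly the paper's approach: the corollary is stated immediately after Theorem\rl{t1} with the remark that it follows from Lemma\rl{lemafernanda}, and your argument simply unpacks that one-line deduction. The orientation discussion you include is precisely the content of Lemma\rl{lemafernanda}, so nothing is missing.
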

On the other hand, in order to apply  Lemma\rl{lemaCR}, it is convenient to have some geometric condition which implies the existence of an elliptic point. The following technical result is a consequence of a more general result given by the authors in \cite[Lemma 5.4 and Remark 5.7]{ABC}, jointly with Brasil Jr., and it will be essential in most of our applications. For that reason and for the sake of completeness we give here a proof of it, especially adapted to the case of hypersurfaces in GRW spacetimes.
\begin{lemma}[Existence of an elliptic point]
\label{lemaellipticpoint}
Let \x\ be a compact spacelike hypersurface immersed into a spatially closed GRW spacetime, and assume that $f'(h)$ does not vanish on \m\ (equivalently, $\psi(\m)$ is contained in a slab
\[
\Omega(t_1,t_2)=(t_1,t_2)\times M\subset\grw
\]
on which $f'$ does not vanish)
\begin{enumerate}
\item If $f'(h)>0$ on \m\ (equivalently, $f'>0$ on $(t_1,t_2)$), then there exists an elliptic point of \m\ with respect to its future-pointing Gauss map.
\item If $f'(h)<0$ on \m\ (equivalently, $f'<0$ on $(t_1,t_2)$), then there exists an elliptic point of \m\ with respect to its past-pointing Gauss map.
\end{enumerate}
\end{lemma}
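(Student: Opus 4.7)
The strategy is to locate the elliptic point at a critical point of the height function $h=\pi_I\circ\psi$, exploiting the fact that at such a point $\partial_t$ becomes normal to \m\ and formula \rf{hessh} collapses to a clean algebraic identity relating $\nabla^2 h$ and the shape operator.

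First, by compactness of \m\ the continuous function $h$ attains a minimum at some $p_\ast\in\m$ and a maximum at some $p^\ast\in\m$. At any critical point $p_0$ of $h$, \rf{gradh} gives $\partial_t^\top(p_0)=0$, so $\partial_t$ is normal to \m\ at $p_0$; since $\partial_t$ and the future-pointing Gauss map $N$ are both unit timelike future-pointing, this forces $N(p_0)=\partial_t(p_0)$, hence $\g{N}{\partial_t}(p_0)=-1$, while for the past-pointing Gauss map $N^-=-N$ one gets $\g{N^-}{\partial_t}(p_0)=+1$. Plugging $\nabla h(p_0)=0$ into \rf{hessh} yields, for every $X\in T_{p_0}\m$,
\[
\nabla^2 h(X,X)\big|_{p_0}=-(\log f)'(h(p_0))\,\g{X}{X}+\g{N}{\partial_t}(p_0)\,\g{AX}{X},
\]
and the identity is invariant under the simultaneous flip $N\mapsto N^-=-N$, $A\mapsto A^-=-A$.

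In case (1) I would evaluate this at $p_0=p_\ast$ with the future-pointing $N$. Since $(\log f)'(h(p_\ast))=f'(h(p_\ast))/f(h(p_\ast))>0$ by hypothesis, the positive semidefiniteness $\nabla^2 h|_{p_\ast}\geq 0$ combined with $\g{N}{\partial_t}(p_\ast)=-1$ yields
\[
\g{AX}{X}\big|_{p_\ast}\leq -(\log f)'(h(p_\ast))\,\g{X}{X}<0\qquad\text{for all }X\neq 0,
\]
so every principal curvature of \m\ at $p_\ast$ is strictly negative, and $p_\ast$ is an elliptic point with respect to $N$. For case (2) I would argue symmetrically at $p^\ast$ with the past-pointing Gauss map $N^-$: using $\nabla^2 h|_{p^\ast}\leq 0$, $\g{N^-}{\partial_t}(p^\ast)=+1$, and $(\log f)'(h(p^\ast))<0$, the analogous identity forces
\[
\g{A^-X}{X}\big|_{p^\ast}\leq (\log f)'(h(p^\ast))\,\g{X}{X}<0,
\]
which exhibits $p^\ast$ as an elliptic point with respect to $N^-$.

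I do not anticipate any serious technical obstacle: the whole argument rests on the collapse of \rf{hessh} at a critical point to $\nabla^2 h=-(\log f)'(h_0)\,\mathrm{Id}+\g{N}{\partial_t}\,A$, after which it is purely a matter of sign bookkeeping. The crucial pairing is \emph{minimum plus future-pointing $N$} in case (1) and \emph{maximum plus past-pointing $N^-$} in case (2); in each case the two contributions on the right-hand side cooperate to push the appropriate shape operator into being strictly negative definite.
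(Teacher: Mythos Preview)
Your proposal is correct and follows essentially the same approach as the paper's own proof: both locate the elliptic point at the minimum (resp.\ maximum) of $h$ for the future-pointing (resp.\ past-pointing) Gauss map, use that $\nabla h$ vanishes there so that \rf{hessh} collapses to $\nabla^2 h=-(\log f)'(h_0)\,\mathrm{Id}+\g{N}{\partial_t}A$, and then read off the strict negativity of the principal curvatures from the sign of the Hessian and of $f'$. The paper phrases the final step in a basis of principal directions rather than via the quadratic form $\g{AX}{X}$, but this is purely cosmetic.
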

\begin{proof}
When $f'(h)>0$, let us choose on \m\ the future-pointing Gauss map $N$ and let $p_0\in\m$ be the point where the height function $h$ attains its minimum on \m. Then we have $\nabla h(p_0)=0$, $\g{N}{\partial_t}(p_0)=-1$ and
by \rf{hessh} we get
\[
\nabla^2h_{p_0}(e_i,e_i)=-(\log f)'(\hmin)-\kappa_i(p_0)\geq 0
\]
for every $i=1,\ldots,n$, where $\{e_1,\ldots,e_n\}$ is a basis of principal directions at $p_0$. Since $f'(\hmin)>0$ we get from here
\[
\kappa_i(p_0)\leq -(\log f)'(\hmin)<0,
\]
as dessired. On the other hand, when $f'(h)<0$ we choose on \m\ the past-pointing Gauss map $N$ and consider $p_0\in\m$ the point where the height function $h$ attains its maximum on \m. Now we have $\nabla h(p_0)=0$, $\g{N}{\partial_t}(p_0)=1$ and by \rf{hessh} we get
\[
\nabla^2h_{p_0}(e_i,e_i)=-(\log f)'(\hmax)+\kappa_i(p_0)\leq 0
\]
for every $i=1,\ldots,n$,  with $\{e_1,\ldots,e_n\}$ a basis of principal directions at $p_0$. Since now $f'(\hmax)<0$ we get from here
\[
\kappa_i(p_0)\leq (\log f)'(\hmin)<0.
\]
\end{proof}
Now, using Lemma\rl{lemaCR} and Lemma\rl{lemaellipticpoint} we can also state the following application of Theorem\rl{t1}.
\begin{corollary}
\label{co2}
Let \grw\ a spatially closed GRW spacetime such that its warping function satisfies the condition
\[
ff''-f'^2\leq 0.
\]
Assume that $\m^n$, $n\geq 3$, is a compact spacelike hypersurface immersed into \grw\ which is contained in a slab  $\Omega(t_1,t_2)\subset\grw$ on which $f'$ does not vanish. If $H_{k+1}>0$ on \m\ for some $k\geq 2$ and one of the  the quotients $H_{j+1}/H_{j}$ is constant for some $1\leq j\leq k$, then \m\ is necessarily an embedded slice $\{t_0\}\times M$, with $t_0\in(t_1,t_2)$.
\end{corollary}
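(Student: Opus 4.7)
The plan is to reduce the statement to a direct application of Theorem~\ref{t1} by establishing the definiteness of some Newton transformation $P_j$ with $1\leq j\leq k$ on $\Sigma$. The two ingredients needed for this are already available in the paper: Lemma~\ref{lemaellipticpoint} (to produce an elliptic point) and Lemma~\ref{lemaCR} (to propagate positive definiteness down from $P_k$ to all lower Newton transformations).

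First I would fix the orientation. Since $f'$ is continuous and non-vanishing on the slab $\Omega(t_1,t_2)$ that contains $\psi(\Sigma)$, it has constant sign there. Choose the Gauss map $N$ on \m\ according to Lemma~\ref{lemaellipticpoint}: future-pointing if $f'>0$ and past-pointing if $f'<0$, interpreting the hypothesis $H_{k+1}>0$ with respect to this $N$ (this choice only matters if $k+1$ is odd). Lemma~\ref{lemaellipticpoint} then furnishes an elliptic point $p_0\in\m$. Observe also that the condition $H_{k+1}>0$ forces $k+1\leq n$, so $2\leq k\leq n-1$, which is exactly the range in which Lemma~\ref{lemaCR} is formulated.

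With the elliptic point $p_0$ and the sign condition $H_{k+1}>0$ in hand, Lemma~\ref{lemaCR} yields that $P_j$ is positive definite on \m\ for every $1\leq j\leq k$ (with a possible sign adjustment of $N$ when $j$ is odd, which does not affect definiteness). In particular, for the distinguished index $j\in\{1,\dots,k\}$ at which $H_{j+1}/H_j$ is assumed constant, $P_j$ is definite on \m.

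All hypotheses of Theorem~\ref{t1} are therefore satisfied at this index $j$: \grw\ is spatially closed with $ff''-f'^2\leq0$, \m\ is compact spacelike, $P_j$ is definite, and $H_{j+1}/H_j$ is constant. The theorem then gives that \m\ is an embedded slice $\{t_0\}\times M$ with $f'(t_0)\neq0$, since $j\geq 1$. The inclusion $\psi(\m)\subset\Omega(t_1,t_2)$ forces $t_0\in(t_1,t_2)$, which completes the statement. The only point that requires care is coordinating the two orientation choices (elliptic point versus sign of $H_{k+1}$ when $k+1$ is odd); I do not anticipate a genuine obstacle, as the corollary is a clean combination of Theorem~\ref{t1} with the two auxiliary lemmas.
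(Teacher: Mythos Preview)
Your proposal is correct and matches the paper's approach exactly: the paper simply states that Corollary~\ref{co2} follows from Theorem~\ref{t1} by using Lemma~\ref{lemaellipticpoint} and Lemma~\ref{lemaCR}, and that is precisely the reduction you carry out. The orientation worry you flag is not a genuine obstacle: at the elliptic point produced by Lemma~\ref{lemaellipticpoint} one automatically has $H_{k+1}(p_0)>0$ for that Gauss map, so (when $k+1$ is odd) the hypothesis $H_{k+1}>0$ on \m\ forces the two orientations to agree.
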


The proof of our Theorem\rl{t1} will be a consequence of the following auxiliary result, which is a generalization of Lemma 3 and Corollary 4 in \cite{AM}.
\begin{lemma}
\label{l1}
Let \grw\ a spatially closed GRW spacetime, and let $\m^n$ be a compact
spacelike hypersurface immersed into \grw. Assume that for some
$k\in\{0,1\ldots, n-1\}$, the $k$-th Newton transformation $P_k$ is
semi-definite on \m\ and the function $H_k$ does not vanish on \m. Then
the quotient $H_{k+1}/H_k$ satifies
\beq
\label{eq1}
\min\left(\frac{H_{k+1}}{H_k}\right)\leq (\log f)'(\hmax) \quad
\mathrm{and} \quad \max\left(\frac{H_{k+1}}{H_k}\right)\geq (\log
f)'(\hmin),
\eeq
where \hmin\ and \hmax\ denote, respectively, the minimum and the maximum
values of the height function on \m. In particular, if the warping
function satifies condition \rf{logf} then
\beq
\label{eq2}
\min\left(\frac{H_{k+1}}{H_k}\right)\leq (\log f)'(\hmax)\leq (\log f)'(\hmin)\leq \max\left(\frac{H_{k+1}}{H_k}\right).
\eeq
\end{lemma}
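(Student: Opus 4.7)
The plan is to apply the operator $L_k$ to the reparametrised height function $g(h)$ at its global extrema on \m\ and to read off the desired inequalities from the resulting sign of $L_k(g(h))$. Let $p_{\min},p_{\max}\in\m$ be points where $h$ attains $\hmin$ and $\hmax$; these exist by compactness. At each of them $\nabla h=0$, so the splitting $\partial_t=\partial_t^\top-\g{N}{\partial_t}N$ together with the spacelike character of \m\ forces $\g{N}{\partial_t}(p_{\min})=\g{N}{\partial_t}(p_{\max})=-1$ for the future-pointing Gauss map, and hence $\g{N}{K}=-f(h)$ at those two points. Substituting into \rf{Lk(g)} of Lemma\rl{lemma3.1} yields
\[
L_k(g(h))(p_{\min})=c_k f(\hmin)\bigl(H_{k+1}(p_{\min})-(\log f)'(\hmin)H_k(p_{\min})\bigr),
\]
and the analogous expression at $p_{\max}$.

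Next I would invoke the Hopf maximum principle on the smooth function $g(h)$, which shares critical points with $h$ because $g'=f>0$. By \rf{hessg(h)} one has $L_k(g(h))=\mathrm{trace}(P_k\circ\nabla^2 g(h))$, while $\nabla^2 g(h)(p_{\min})\geq 0$ and $\nabla^2 g(h)(p_{\max})\leq 0$. Because $P_k$ is semi-definite on \m, the trace of the composition with $\nabla^2 g(h)$ is sign-definite at each extremum: if $P_k\geq 0$ then $L_k(g(h))(p_{\min})\geq 0$ and $L_k(g(h))(p_{\max})\leq 0$, while the reverse inequalities hold if $P_k\leq 0$. To convert this information into a bound on the quotient $H_{k+1}/H_k$, I would use \rf{trPk}, namely $\mathrm{trace}(P_k)=c_k H_k$: semi-definiteness of $P_k$ together with the hypothesis that $H_k$ does not vanish forces $H_k$ to have everywhere the same (constant) sign as $P_k$. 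Dividing by $H_k$ with the appropriate sign then gives, in either scenario,
\[
\frac{H_{k+1}(p_{\min})}{H_k(p_{\min})}\geq (\log f)'(\hmin)\quad\mathrm{and}\quad\frac{H_{k+1}(p_{\max})}{H_k(p_{\max})}\leq (\log f)'(\hmax),
\]
which is exactly \rf{eq1}.

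The second assertion \rf{eq2} then follows immediately: hypothesis \rf{logf} is equivalent to $(\log f)''=(ff''-f'^2)/f^2\leq 0$, so $(\log f)'$ is non-increasing on $I$ and, since $\hmin\leq\hmax$, one has $(\log f)'(\hmax)\leq(\log f)'(\hmin)$; chaining this with \rf{eq1} produces \rf{eq2}. The only delicate point is the sign bookkeeping in the third step: the cases $P_k\geq 0$ and $P_k\leq 0$ produce opposite raw inequalities at the extrema, but the identity $\mathrm{trace}(P_k)=c_k H_k$ ties the sign of $H_k$ to that of $P_k$, so that dividing through by $H_k$ restores the same two inequalities in both cases and no separate case analysis is required.
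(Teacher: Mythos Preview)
Your proof is correct and follows essentially the same line as the paper's: evaluate $L_k$ of the height function at the extrema of $h$, use the semi-definiteness of the Hessian there together with that of $P_k$ to control the sign of $L_k$, and then divide by $H_k$ (whose sign is tied to that of $P_k$ via $\mathrm{trace}(P_k)=c_kH_k$). The only cosmetic differences are that the paper works with $L_k(h)$ rather than $L_k(g(h))$ (the two formulas coincide at critical points of $h$) and spells out the positive and negative semi-definite cases separately, whereas you observe that dividing by $H_k$ handles both at once; also note that what you call the ``Hopf maximum principle'' is really just the second-order necessary condition at an interior extremum.
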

\begin{proof}
Choose on \m\ its future-pointing Gauss map $N$. Since \m\ is compact, there exist points $\pmin,\pmax\in\m$ where the height function attains its minimum and its maximum values, respectively, that is,
\[
h(\pmin)=\min_{\m}h=\hmin\leq\hmax=h(\pmax)=\max_{\m}h.
\]
In particular, $\nabla h(\pmin)=0$ and $\nabla h(\pmax)=0$, and from
\rf{gradh} we have $N(\pmin)=(\partial_t)_{\pmin}$ and $N(\pmax)=(\partial_t)_{\pmax}$.
Using this in \rf{Lk(h)}, we obtain that
\beq
\label{e4.1}
(1/c_k)L_k(h)(\pmin)=H_{k+1}(\pmin)-(\log f)'(\hmin)H_k(\pmin),
\eeq
and
\beq
\label{e4.2}
(1/c_k)L_k(h)(\pmax)=H_{k+1}(\pmax)-(\log f)'(\hmax)H_k(\pmax).
\eeq
Let us consider first the case where $P_k$ is positive semi-definite on \m. Then $H_k>0$ on \m, because
we are also assuming that $H_k\neq 0$ on \m. In that case, taking inot account that
$L_k(h)=\mathrm{tr}(P_k\circ\nabla^2h)$,
$\nabla^2h(\pmin)$ is positive semi-definite and $\nabla^2h(\pmax)$ is negative semi-definite, then
we have that $L_k(h)(\pmin)\geq 0$ and $L_k(h)(\pmax)\leq 0$. Therefore, equations \rf{e4.1} and \rf{e4.2} imply
\beq
\label{e4.3}
\max\left(\frac{H_{k+1}}{H_k}\right)\geq\frac{H_{k+1}}{H_k}(\pmin)\geq (\log f)'(\hmin),
\eeq
and
\beq
\label{e4.4}
\min\left(\frac{H_{k+1}}{H_k}\right)\leq\frac{H_{k+1}}{H_k}(\pmax)\leq (\log f)'(\hmax).
\eeq
This yields \rf{eq1}. Finally, if $-\log f$ is convex, then $(\log f)'$ is non increasing and so
\[
(\log f)'(\hmax)\leq (\log f)'(\hmin),
\]
which jointly with \rf{eq1} yields \rf{eq2}.
This completes the proof of Lemma\rl{l1} in the case where $P_k$ is positive semi-definite.
When $P_k$ is negative semi-definite, then the proof above also works, taking into account that now $H_k<0$ on \m, and $L_k(h)(\pmin)\leq 0$ and $L_k(h)(\pmax)\geq 0$.
\end{proof}

\begin{proof}[Proof of Theorem\rl{t1}]
Let us choose again the future-pointing Gauss map $N$ on \m.
Since the quotient $H_{k+1}/H_k$ is constant on \m, we know from \rf{eq2} that
$H_{k+1}/H_k=(\log f)'(\hmax)=(\log f)'(\hmin)$. Therefore
\[
(\log f)'(h)=\frac{f'(h)}{f(h)}=H_{k+1}/H_k=\alpha
\]
is constant on \m, because $(\log f)'$ is non increasing on $I$ by the hypothesis on the convexity of $-\log f$. Then, \rf{Lk(g)} reduces to
\beq
\label{Lk(g)bis}
L_k(g(h))=-c_k(f(h)+\g{N}{K})H_{k+1}.
\eeq
Observe that $H_{k+1}=\alpha H_k$ does not change sign on \m, and observe also that the function $\g{N}{K}$
satisfies
\beq
\label{e4.5}
\g{N}{K}\leq-\|K\|=-f(h)<0 \quad \mathrm{on} \quad \m.
\eeq
Hence, by \rf{Lk(g)bis} we have that $L_k(g(h))$ does not change sign on \m, which is a compact Riemannian manifold. In the case where $P_k$ is positive definite, then $L_k$ is an elliptic operator, and we may apply the classical maximum principle for such operators to conclude that the function $g(h)$ is constant on the compact \m. If $P_k$ is negative definite, then the operator $-L_k$ is now elliptic and by the same argument we also conclude that $g(h)$ is constant. But $g(t)$ being an increasing function on $t$, that means that the height function $h$ is constant on \m\ and then the hypersurface is an slice $\{t_0\}\times M$. Finally, when $k>0$ it necessarily holds $f'(t_0)\neq 0$ because slices $\{t\}\times M$ with $f'(t)=0$ are totally geodesic in \grw, and then $P_k=0$ for every $k>0$.
\end{proof}

It is worth pointing out that if the warping function satisfies the stronger condition $ff''-f'^2\leq 0$ with equality only at isolated points of $I$ at most, then Theorem\rl{t1} remains true under the assumption that $P_k$ is semi-definite and the function $H_k$ does not vanish on \m. Actually, in this case we also get from \rf{eq2} in
Lemma\rl{l1} that $H_{k+1}/H_k=(\log f)'(\hmax)=(\log f)'(\hmin)$. But the hypothesis on the warping function implies now that $(\log f)'$ is strictly decreasing on $I$. Therefore, $\hmin=\hmax$ and the height function $h$ is constant
on \m. Then, we also obtain the following related result.
\begin{theorem}
\label{t1bis} Let \grw\ a spatially closed GRW spacetime such that its warping function
satisfies the condition \rf{logf}, with equality only at isolated points of $I$ at most. Let $\m^n$ be a compact spacelike hypersurface immersed into \grw\ whose $k$-th Newton transformation $P_k$
is semi-definite on \m\ and $H_k$ does not vanish on \m, for some $k=0, 1\ldots, n-1$. If the quotient $H_{k+1}/H_k$ is constant, then the
hypersurface is an embedded slice $\{t_0\}\times M$, where $t_0$ satisfies $f'(t_0)\neq0$ if $k>0$.
\end{theorem}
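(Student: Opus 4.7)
The plan is to run the argument of Theorem\rl{t1} verbatim up to the point where the maximum principle is invoked, and then to replace that elliptic step by a strict-monotonicity argument that exploits the strengthened hypothesis on $f$. Since $P_k$ is semi-definite and $H_k$ never vanishes on \m, Lemma\rl{l1} applies directly; combined with the constancy of $H_{k+1}/H_k$, the chain \rf{eq2} collapses to
\[
\frac{H_{k+1}}{H_k}=(\log f)'(\hmax)=(\log f)'(\hmin).
\]

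The crucial new ingredient is that the reinforced condition $ff''-f'^2\leq 0$, with equality only at isolated points of $I$, forces $(\log f)'$ to be \emph{strictly} decreasing on $I$. Indeed, $(\log f)''=(ff''-f'^2)/f^2$ is continuous and non-positive, vanishing only on a discrete subset; so if $(\log f)'(a)=(\log f)'(b)$ held for some $a<b$ in $I$, then $\int_a^b(\log f)''\,dt=0$ with a continuous non-positive integrand, which would force $(\log f)''\equiv 0$ on $[a,b]$ and contradict the isolated-zero hypothesis. Hence $(\log f)'(\hmax)=(\log f)'(\hmin)$ implies $\hmax=\hmin$, so the height function $h$ is constant on \m. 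By connectedness of \m\ and equality of dimensions, $\psi(\m)=\{t_0\}\times M$ and the immersion is an embedding onto the slice. The non-vanishing of $f'(t_0)$ when $k>0$ then follows exactly as in Theorem\rl{t1}: from \rf{nablaK} one reads off that the slice $\{t\}\times M$ has all principal curvatures equal to $-f'(t)/f(t)$, so $f'(t_0)=0$ would make \m\ totally geodesic and force $H_k\equiv 0$, contradicting the assumption that $H_k$ does not vanish.

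The only genuinely new step, and therefore the main obstacle, is passing from semi-definiteness of $P_k$ to constancy of $h$; it is the strict monotonicity of $(\log f)'$ that now bridges this gap without invoking an elliptic maximum principle, which is no longer available when $P_k$ is merely semi-definite. Everything else is a transparent reuse of Lemma\rl{l1} together with the standard umbilicity computation for slices in warped products.
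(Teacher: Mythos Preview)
Your proof is correct and follows essentially the same route as the paper. The paper likewise invokes Lemma\rl{l1} to obtain $(\log f)'(\hmax)=(\log f)'(\hmin)$, observes that the strengthened hypothesis makes $(\log f)'$ strictly decreasing, concludes $\hmin=\hmax$, and finishes with the totally-geodesic observation for slices with $f'(t_0)=0$; you have simply supplied more detail on the strict-monotonicity step.
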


\section{Minkowski formulae for hypersurfaces in GRW spacetimes}
\label{s6}
The use of Minkowski-type integral formulae for compact spacelike hypersurfaces in Lorentzian ambient spacetimes was first started by Montiel in \cite{Mo1} in his study of hypersurfaces with constant mean curvature in de Sitter space, and it was continued later by Al\'\i as, Romero and S\'{a}nchez in \cite{ARS1,ARS2,ARS3} for constant mean curvature hypersurfaces in GRW spacetimes and, more generally, in conformally stationary spacetimes. Observe that for the case of the mean curvature, only the first and second Minkowski formulae come into play. In \cite{AAR}, Aledo, Al\'\i as and Romero developed  the general Minkowski formulae for compact spacelike hypersurfaces in de Sitter space, and applied them to the study of hypersurfaces with constant $k$-th mean curvature. On the other hand, in \cite{Mo} Montiel gave also another proof of the first and second Minkowski formulae for spacelike hypersurfaces in conformally stationary spacetimes with a closed conformal field, as well as
 a proof of the third Minkowski formula for the case where the ambient spacetime has constant sectional curvature. As observed by Montiel, his method of proof, which follows the ideas of Hsiung \cite{Hs} and uses parallel hypersurfaces, has a very nice geometric interpretation but it is very difficult to carry our for succesive Minkowski formulae. The reason is that it involves covariant derivatives of the Ricci tensor of the ambient space, and one need to assume at least that the ambient space is Einstein to get some control of them. In \cite{ABC} the authors, jointly with Brasil Jr., developed another method to obtain general Minkowski formulae for spacelike hypersurfaces in conformally stationary spacetimes. Although more analytic than geometric, this method, which follows the ideas of Reilly in \cite{Re}, has the advantage of working for succesive Minkowski formulae. However, in order to get some interesting applications from them, one need to assume again that the ambien
 t space has constant sectional curvature.

In this section, and as another application of our formulae in Lemma\rl{lemma3.1}, we will derive general Minkowski-type formulae for compact spacelike hypersurfaces immersed into a GRW spacetime. The main interest of these new approach is that our new Minkowski formulae can be applied to the study of hypersurfaces with constant higher order mean curvature in arbitrary Robertson-Walker spacetimes, even when the ambient space does not have constant sectional curvature.

Let $\m^n$ be a compact spacelike hypersurface immersed into a GRW spacetime. The general Minkowski formulae for \m, as derived in \cite{ABC}, are just a simple consequence of our formula \rf{Lk(g)} and the expression \rf{divPk}. In fact, it follows from \rf{divPk} and \rf{Lk(g)} that
\[
\mathrm{div}(P_k(\nabla g(h)))=f(h)\g{\mathrm{div}P_k}{\nabla h}-c_k(f'(h)H_k+\g{N}{K}H_{k+1}),
\]
for every $k=0,\ldots, n-1$. Therefore, integrating this equality on the compact Riemannian manifold \m, the divergence theorem gives the following Minkowski formulae,
\beq
\label{MF}
c_k\int_{\Sigma}(f'(h)H_k+\g{N}{K}H_{k+1})d\m=\int_{\Sigma}f(h)\g{\mathrm{div}P_k}{\nabla h}d\m,
\eeq
where $d\m$ is the Riemannian measure induced on \m. This is just Theorem 4.1 in \cite{ABC} for the case of hypersurfaces into a GRW spacetime. In particular, if $k=0$ then $P_0=I$, $\mathrm{div}P_0=0$, and this gives the first Minkowski formula
\[
\int_{\Sigma}(f'(h)+\g{N}{K}H_{1})d\m=0,
\]
which holds true without any additional hypothesis on the ambient GRW spacetime. On the other hand, when $k=1$ we easily get from the inductive formula in Lemma\rl{lemadivPk} that
\[
\g{\mathrm{div}P_1}{\nabla h}=\sum_{i=1}^{n}\g{\overline{R}(E_i,\nabla h)N}{E_i}=-\Rico(N,\nabla h),
\]
where $\Rico$ denotes the Ricci tensor of \grw. A direct computation using the general relationship between the
Ricci tensor of a warped product and the Ricci tensors of its base and its fiber, as well as the derivatives of the warping function (see for instance \cite[Corollary 43]{ONe}), implies that, for the special case of
our GRW ambient spacetimes, we have
\begin{eqnarray}
\label{ricci}
\nonumber \Rico(U,V) & = & \mathrm{Ric}_{M}(U^*,V^*)+
(n((\log f)')^2+(\log f)'')\g{U}{V}\\
{} & {} & -(n-1)(\log f)''\g{U}{\partial_t}\g{V}{\partial_t},
\end{eqnarray}
for arbitrary vector fields $U,V$ on \grw, where, for simplicity,  we are writing $\log f=\log f(\pi_I)$,
$(\log f)'=(\log f)'(\pi_I)$, and $(\log f)''=(\log f)''(\pi_I)$. Here $\mathrm{Ric}_{M}$ denotes
the Ricci tensor of the Riemannian manifold $M^n$, and $U^*=(\pi_{M})_*(U)$ denotes projection
onto the fiber $M$ of a vector field $U$ defined on \grw, that is,
\[
U=U^*-\g{U}{\partial_t}\partial_t.
\]
Observe that the decomposition $\partial_t=-\nabla h-\g{N}{\partial_t}N$ yields
$(\nabla h)^*=-\g{N}{\partial_t}N^*$. In particular, it follows from \rf{ricci} that
\[
\g{\mathrm{div}P_1}{\nabla h}=-\Rico(N,\nabla h)=\g{N}{\partial_t}
\left(\mathrm{Ric}_{M}(N^*,N^*)-(n-1)(\log f)''(h)\|\nabla h\|^2\right).
\]
Therefore, the second Minkowski formula (when $k=1$) for a compact spacelike hypersurface into a GRW spacetime becomes
\begin{eqnarray*}
c_1\int_{\Sigma}(f'(h)H_1+\g{N}{K}H_{2})d\m= \\
\int_{\Sigma}\g{N}{K}
\left(\mathrm{Ric}_{M}(N^*,N^*)-(n-1)(\log f)''(h)\|\nabla h\|^2\right)d\m.
\end{eqnarray*}
For its further reference, we summarize here the first two Minkowski formulae for spacelike hypersurfaces in a GRW spacetime.
\begin{theorem}
Let \x\ be a compact spacelike hypersurface immersed into a GRW spacetime. Then
\beq
\label{MF1}
\int_{\Sigma}(f'(h)+\g{N}{K}H_{1})d\m=0,
\eeq
and
\begin{eqnarray}
\label{MF2}
n(n-1)\int_{\Sigma}(f'(h)H_1+\g{N}{K}H_{2})d\m= \\
\nonumber \int_{\Sigma}\g{N}{K}
\left(\mathrm{Ric}_{M}(N^*,N^*)-(n-1)(\log f)''(h)\|\nabla h\|^2\right)d\m.
\end{eqnarray}
\end{theorem}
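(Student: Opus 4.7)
The plan is to derive both formulae as integrated versions of a single pointwise identity and then specialize the ambient curvature term to $k=0$ and $k=1$. Starting from formula \rf{Lk(g)} in Lemma\rl{lemma3.1}, I substitute the expression for $L_k(g(h))$ into the divergence identity \rf{divPk} (applied with $g(h)$ in place of $f$, using $\nabla g(h) = f(h)\nabla h$) to get
\[
\mathrm{div}(P_k(\nabla g(h))) = f(h)\g{\mathrm{div}P_k}{\nabla h} - c_k\left(f'(h)H_k + \g{N}{K}H_{k+1}\right).
\]
Integrating over the compact $\m$, the divergence theorem annihilates the left-hand side and yields the general Minkowski formula \rf{MF}. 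Formulas \rf{MF1} and \rf{MF2} will then be read off as the $k=0$ and $k=1$ specializations.

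The case $k=0$ is immediate: $P_0 = I$ is parallel, so $\mathrm{div}P_0 = 0$, and \rf{MF1} follows from \rf{MF} without any hypothesis on the Riemannian fibre. The substantive work lies in the case $k=1$, where the term $\g{\mathrm{div}P_1}{\nabla h}$ must be identified with an intrinsic Ricci-type quantity and then expanded using the warped-product structure. Applying Lemma\rl{lemadivPk} with $k=1$ and $X = \nabla h$ (noting $A^0 = I$ and $P_0 = I$), the double sum collapses to $\sum_{i=1}^n\g{\Ro(E_i,\nabla h)N}{E_i}$, which by the curvature sign convention recorded in the footnote of Section\rl{s3} equals $-\Rico(N,\nabla h)$.

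The final step is to expand $\Rico(N,\nabla h)$ via \rf{ricci} with $U=N$ and $V=\nabla h$. Two features simplify matters. First, $\g{N}{\nabla h} = 0$ kills the conformal term proportional to $\g{U}{V}$. Second, the decomposition $\partial_t = \partial_t^\top - \g{N}{\partial_t}N$ combined with \rf{gradh} yields, after a short calculation,
\[
(\nabla h)^* = -\g{N}{\partial_t}N^* \quad\text{and}\quad \g{\nabla h}{\partial_t} = -\|\nabla h\|^2,
\]
so the fibre Ricci term becomes $\mathrm{Ric}_M(N^*,(\nabla h)^*) = -\g{N}{\partial_t}\mathrm{Ric}_M(N^*,N^*)$ and the $(\log f)''$ cross term becomes $(n-1)(\log f)''\g{N}{\partial_t}\|\nabla h\|^2$. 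Combining gives
\[
-\Rico(N,\nabla h) = \g{N}{\partial_t}\left(\mathrm{Ric}_M(N^*,N^*) - (n-1)(\log f)''(h)\|\nabla h\|^2\right).
\]
Multiplying through by $f(h)$ and using $\g{N}{K} = f(h)\g{N}{\partial_t}$ recasts the right-hand side of \rf{MF} at $k=1$ exactly in the form of \rf{MF2}. The only delicate point of the whole argument is the sign bookkeeping in passing from the ambient warped-product Ricci tensor to the fibre Ricci tensor; once the projection identity for $(\nabla h)^*$ is in hand, everything else is direct substitution.
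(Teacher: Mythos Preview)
Your proposal is correct and follows essentially the same route as the paper: you derive the general integrated identity \rf{MF} from \rf{divPk} and \rf{Lk(g)}, read off \rf{MF1} from $\mathrm{div}P_0=0$, and for $k=1$ combine Lemma\rl{lemadivPk} with the warped-product Ricci expression \rf{ricci} and the projection identity $(\nabla h)^*=-\g{N}{\partial_t}N^*$ to obtain \rf{MF2}. The computations, the key lemmas invoked, and even the sign-tracking remarks match the paper's own derivation almost line for line.
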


In order to find a useful expression for the succesive Minkowski formulae given by \rf{MF}, in the case where $n\geq 3$, we will assume that \grw\ is a (classical) Robertson-Walker spacetime, that is, the Riemannian manifold $M^n$ has constant sectional curvature $\kappa$. Our objective is to compute
$\g{\mathrm{div}P_k}{\nabla h}$ when $2\leq k\leq n-1$. From Lemma\rl{lemadivPk} we know that
\beq
\label{lemadivPkbis}
\g{\mathrm{div}P_k}{\nabla h}=\sum_{j=0}^{k-1}\sum_{i=1}^{n}\g{\Ro(E_i,A^{k-1-j}(\nabla h))N}{P_jE_i},
\eeq
where $\{ E_1,\ldots,E_n\}$ is an arbitrary local orthonormal frame on \m.
A direct but heavy computation using the general relationship between the
curvature tensor of a warped product and the curvature tensors of its base
and its fiber, as well as the derivatives of the warping function (see for
instance \cite[Proposition 42]{ONe}), implies that, for our GRW ambient spacetimes, we have the following expression
\begin{eqnarray}
\label{curvatura}
\nonumber \Ro(U,V)W & = & R_{M}(U^*,V^*)W^*+
((\log f)')^2(\g{U}{W}V-\g{V}{W}U)\\
{} & {} & +(\log f)''\g{W}{\partial_t}(\g{V}{\partial_t}U-\g{U}{\partial_t}V)\\
\nonumber {} & {} & -(\log f)''(\g{V}{\partial_t}\g{U}{W}-\g{U}{\partial_t}\g{V}{W})\partial_t,
\end{eqnarray}
for arbitrary vector fields $U,V,W$ on \grw. Here $R_{M}$ denotes
the curvature tensor of the Riemannian manifold $M^n$, which in principle we do not assume to be of constant
sectional curvature. In particular, \rf{curvatura} implies
\beq
\label{6.1}
\Ro(E_i,X)N=R_{M}(E_i^*,X^*)N^*-(\log f)''(h)\g{N}{\partial_t}(\g{X}{\nabla h}E_i-\g{E_i}{\nabla h}X)
\eeq
for every tangent vector field $X\in\xm$. From the decompositions
\[
N=N^*-\g{N}{\partial_t}\partial_t, \quad
E_i=E_i^*-\g{E_i}{\partial_t}\partial_t, \quad \mathrm{and} \quad
X=X^*-\g{X}{\partial_t}\partial_t,
\]
it follows easily that
\begin{eqnarray}
\label{A1} (N^*)^\top & = & -\g{N}{\partial_t}\nabla h,\\
\label{A2} (E_i^*)^\top & = & E_i+\g{E_i}{\nabla h}\nabla h, \\
\label{A3} (X^*)^\top & = & X+\g{X}{\nabla h}\nabla h,
\end{eqnarray}
and
\begin{eqnarray}
\label{B1} \g{N^*}{N^*}_{M} & = & \frac{1}{f^2(h)}\|\nabla h\|^2,\\
\label{B2} \g{N^*}{E_i^*}_{M} & = & -\frac{1}{f^2(h)}\g{N}{\partial_t}\g{E_i}{\nabla h}, \\
\label{B3} \g{N^*}{X^*}_{M} & = & -\frac{1}{f^2(h)}\g{N}{\partial_t}\g{X}{\nabla h}.
\end{eqnarray}
Let us remark here, for its use later, that this set of equalities (\rf{A1}-\rf{B3}) holds true for every Riemannian fiber $M^n$, since we are not using yet that $M^n$ has constant sectional curvature.

If $M^n$ has constant sectional curvature $\kappa$, then we also have that
\[
(R_{M}(E_i^*,X^*)N^*)^\top=
\kappa\g{E_i^*}{N^*}_{M}(X^*)^\top-\kappa\g{X^*}{N^*}_{M}(E_i^*)^\top.
\]
Then, using equations \rf{A2} and \rf{A3} we obtain
\[
(R_{M}(E_i^*,X^*)N^*)^\top=
\frac{\kappa}{f^2(h)}\g{N}{\partial_t}\left(\g{X}{\nabla h}E_i-\g{E_i}{\nabla h}X\right),
\]
which jointly with \rf{6.1} yields
\[
(\Ro(E_i,X)N)^\top=\left(\frac{\kappa}{f^2(h)}-(\log f)''(h)\right)\g{N}{\partial_t}
\left(\g{X}{\nabla h}E_i-\g{E_i}{\nabla h}X\right).
\]
Therefore, for every fixed $j=0,\ldots,k-1$ we find
\begin{eqnarray*}
\sum_{i=1}^{n}\g{\Ro(E_i,X)N}{P_jE_i}= \\
\left(\frac{\kappa}{f^2(h)}-(\log f)''(h)\right)\g{N}{\partial_t}
\left(\mathrm{tr}(P_j)\g{X}{\nabla h}-\g{P_jX}{\nabla h}\right)
\end{eqnarray*}
for every tangent vector field $X\in\xm$. Using this expression into \rf{lemadivPkbis}, we get
\begin{eqnarray}
\label{lemadivPkbiss}
\nonumber \g{\mathrm{div}P_k}{\nabla h} & = &
\left(\frac{\kappa}{f^2(h)}-(\log f)''(h)\right)\g{N}{\partial_t} \\
{} & {} &
\sum_{j=0}^{k-1}
\left(\mathrm{tr}(P_j)\g{A^{k-1-j}\nabla h}{\nabla h}-\g{(P_j\circ A^{k-1-j})\nabla h}{\nabla h}\right).
\end{eqnarray}
Now we claim that, in general,
\beq
\label{suma}
\sum_{j=0}^{k-1}
\left(\mathrm{tr}(P_j)A^{k-1-j}-P_j\circ A^{k-1-j}\right)=(n-k)P_{k-1},
\eeq
for every $2\leq k\leq n$. We will prove \rf{suma} by induction on $k$. When $k=2$, using \rf{trPk} we easily see that \rf{suma} reduces to
\[
(n-1)A+n(n-1)H_1I-P_1=(n-1)P_1-P_1=(n-2)P_1.
\]
Now assume that \rf{suma} holds true for $k-1\geq 2$, and write
\begin{eqnarray*}
\sum_{j=0}^{k-1}
\left(\mathrm{tr}(P_j)A^{k-1-j}-P_j\circ A^{k-1-j}\right) & = &
\sum_{j=0}^{k-2}
\left(\mathrm{tr}(P_j)A^{k-2-j}-P_j\circ A^{k-2-j}\right)\circ A \\
{} & {} & +\mathrm{tr}(P_{k-1})I-P_{k-1}.
\end{eqnarray*}
Using the induction hypothesis here we get that
\begin{eqnarray*}
\sum_{j=0}^{k-1}
\left(\mathrm{tr}(P_j)A^{k-1-j}-P_j\circ A^{k-1-j}\right) & = &
(n-k+1)P_{k-2}\circ A\\
{} & {} & +(n-k+1)\bin{n}{k-1}H_{k-1}I-P_{k-1}\\
{} & = & (n-k+1)P_{k-1}-P_{k-1}\\
{} & = & (n-k)P_{k-1},
\end{eqnarray*}
as dessired. Finally, using \rf{suma} into \rf{lemadivPkbiss} we conclude that
\beq
\label{divPKfinal}
\g{\mathrm{div}P_k}{\nabla h}=(n-k)\left(\frac{\kappa}{f^2(h)}-(\log f)''(h)\right)\g{N}{\partial_t}
\g{P_{k-1}\nabla h}{\nabla h}
\eeq
for every $k\geq 2$. Putting this expression into \rf{MF}, we obtain the following version of higher order Minkowski formulae for spacelike hypersurfaces in RW spacetimes.
\begin{theorem}
\label{thMF}
Let \x\ be a compact spacelike hypersurface immersed into a RW spacetime, $M^n$ being a Riemannian manifold with constant sectional curvature $\kappa$. Then, for every $k=2,\ldots,n-1$
\begin{eqnarray}
\label{MFk}
\nonumber \bin{n}{k}\int_{\Sigma}(f'(h)H_k+\g{N}{K}H_{k+1})d\m=\\
\int_{\Sigma}
\left(\frac{\kappa}{f^2(h)}-(\log f)''(h)\right)\g{N}{K}\g{P_{k-1}\nabla h}{\nabla h}d\m.
\end{eqnarray}
\end{theorem}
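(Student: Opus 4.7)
The plan is to assemble the ingredients already prepared in the paragraphs preceding the statement. The starting point is the general Minkowski formula
\[
c_k\int_{\Sigma}(f'(h)H_k+\g{N}{K}H_{k+1})\,d\m = \int_{\Sigma}f(h)\g{\mathrm{div}\,P_k}{\nabla h}\,d\m,
\]
namely \rf{MF}, which holds in any GRW spacetime and was obtained by integrating \rf{Lk(g)} via the divergence formula \rf{divPk}. All that remains is to convert the right-hand side into the geometric quantity on the right of \rf{MFk} using the RW assumption.

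I would then substitute the identity \rf{divPKfinal}, which in the RW setting evaluates $\g{\mathrm{div}\,P_k}{\nabla h}$ as
\[
(n-k)\left(\frac{\kappa}{f^2(h)} - (\log f)''(h)\right)\g{N}{\partial_t}\g{P_{k-1}\nabla h}{\nabla h}.
\]
Using $K = f(h)\partial_t$, so that $f(h)\g{N}{\partial_t} = \g{N}{K}$, together with $c_k = (n-k)\bin{n}{k}$, the factor $(n-k)$ cancels and the extra $f(h)$ combines with $\g{N}{\partial_t}$ into $\g{N}{K}$, producing exactly the stated integrand.

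The substantive work is hidden in the derivation of \rf{divPKfinal}, which has already been carried out above. It rests on three ingredients: Lemma\rl{lemadivPk} expressing $\g{\mathrm{div}\,P_k}{\nabla h}$ as a sum of curvature terms of $\Ro$; the warped-product curvature identity \rf{curvatura}, which combined with the constant sectional curvature of $M^n$ reduces $(\Ro(E_i,X)N)^\top$ to a scalar multiple of $\g{X}{\nabla h}E_i - \g{E_i}{\nabla h}X$; and the algebraic identity
\[
\sum_{j=0}^{k-1}\bigl(\mathrm{tr}(P_j)A^{k-1-j} - P_j\circ A^{k-1-j}\bigr) = (n-k)P_{k-1},
\]
which I would prove by induction on $k$, the inductive step writing the level-$k$ sum as the level-$(k-1)$ identity composed with $A$ plus the remainder $\mathrm{tr}(P_{k-1})I - P_{k-1}$, and invoking the recursion defining $P_{k-1}$ from $P_{k-2}$.

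The main obstacle is precisely the constant sectional curvature assumption on $M^n$: without it, the term $R_M(E_i^*,X^*)N^*$ does not reduce to a tensor linear in $E_i$ and $X$ whose $P_j$-trace can be rewritten in terms of $\mathrm{tr}(P_j)$ and $\g{P_j\nabla h}{\nabla h}$, so the clean collapse furnished by the algebraic identity above would fail. This is why the theorem is stated for RW rather than general GRW ambient spacetimes, even though the initial formula \rf{MF} needs no such restriction.
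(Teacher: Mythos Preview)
Your proposal is correct and follows exactly the paper's own derivation: start from the general Minkowski formula \rf{MF}, plug in the identity \rf{divPKfinal} for $\g{\mathrm{div}\,P_k}{\nabla h}$ (whose proof via Lemma\rl{lemadivPk}, the warped-product curvature formula \rf{curvatura}, and the inductive algebraic identity \rf{suma} you describe accurately), and simplify using $f(h)\g{N}{\partial_t}=\g{N}{K}$ and $c_k=(n-k)\bin{n}{k}$. There is nothing to add; the paper's argument and yours are the same.
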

Formulas \rf{MF1} and \rf{MF2} are refered to as the first and second Minkowski formulae, respectively.
In the following, we will refer to formula \rf{MFk} as the $(k+1)$-th Minkowski formula, with $k=2,\ldots,n-1$.

\section{Umbilicity of hypersurfaces in RW spacetimes}
\label{s7}
In \cite[Theorem 8]{Mo} Montiel gave a uniqueness result for compact spacelike hypersurfaces with constant scalar
curvature immersed into a constant sectional curvature spacetime which is equipped with a closed conformal timelike
vector field (see also \cite[Theorem 5.3]{ABC} for the case where the conformal timelike
vector field is not necessarily closed). As observed by Montiel in \cite{Mo}, every spacetime having a closed conformal
timelike vector field is locally isometric to a GRW spacetime. On the other hand, it is not difficult to see that a GRW
spacetime \grw\ has constant sectional curvature $\bar{\kappa}$ if and only if the Riemannian $M^n$ has constant
sectional curvature $\kappa$ (that is, \grw\ is in fact a RW spacetime) and the warping function $f$ satisfies the
following differential equations
\[
\frac{f''}{f}=\bar{\kappa}=\frac{(f')^2+\kappa}{f^2}
\]
(see, for instance, \cite[Corollary 9.107]{Be}). Therefore, Montiel's result \cite[Theorem 8]{Mo} esentially states as follows.
\begin{theorem}
\label{montielTh8}
Let \grw\ be a spatially closed RW spacetime with constant sectional curvature $\bar{\kappa}$, with $n\geq 3$.
Then every compact spacelike hypersurface \m\ immersed into \grw\ with constant scalar curvature $S$ such that
$S<n(n-1)\bar{\kappa}$ is totally umbilical. Moreover, if $\bar{\kappa}\leq 0$ then \m\ is an embedded slice
$\{t\}\times M^n$ (necessarily with $f'(t)\neq 0$), and if $\bar{\kappa}>0$ then \m\ is either an embedded slice
$\{t\}\times M^n$ (necessarily with $f'(t)\neq 0$) or a round umbilical hypersphere in a de Sitter space.
\end{theorem}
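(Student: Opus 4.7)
The plan is to translate the constant scalar curvature hypothesis into constant $H_2 > 0$ and then exploit the fact that the Minkowski formulae of Section\rl{s6} collapse dramatically in a constant-curvature RW spacetime. The Gauss equation \rf{scalar}, combined with $\bar S = n(n+1)\bar\kappa$ and $\Rico(N,N)=-n\bar\kappa$ valid in an ambient of constant sectional curvature, reduces to $n(n-1)H_2=n(n-1)\bar\kappa - S$, so the assumption that $S$ is constant with $S<n(n-1)\bar\kappa$ is exactly that $H_2$ is a positive constant. Lemma\rl{lemafernanda} then provides a choice of Gauss map with $H_1>0$ and $P_1$ positive definite (so $L_1$ is elliptic), and Cauchy--Schwarz through
\[
H_1^2 - H_2 = \frac{1}{n^2(n-1)}\sum_{i<j}(\kappa_i-\kappa_j)^2
\]
yields $H_1\geq\sqrt{H_2}$ pointwise, with equality exactly on the umbilical locus.

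Next I would exploit the fact that \grw\ has constant sectional curvature $\bar\kappa$ precisely when $f''/f=\bar\kappa=((f')^2+\kappa)/f^2$; these together imply the identity $\kappa/f^2(h)-(\log f)''(h)\equiv 0$ on \m. By \rf{divPKfinal} this forces $\g{\mathrm{div}P_k}{\nabla h}\equiv 0$ for all $k\geq 2$, so every $L_k$ is in divergence form on \m. The parallel cancellation in the right-hand side of \rf{MF2}, using $\mathrm{Ric}_M(N^*,N^*)=(n-1)\kappa\|\nabla h\|^2/f^2(h)$, reduces the Minkowski formulae for every $k=0,1,\ldots,n-1$ to the clean identities
\[
\int_{\Sigma}(f'(h)H_k+\g{N}{K}H_{k+1})d\m=0.
\]

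The heart of the argument is the umbilicity step. I would integrate the divergence identity $\mathrm{div}(H_1\nabla g(h))=H_1 L_0(g(h))+\g{\nabla H_1}{\nabla g(h)}$, substitute $L_0(g(h))=-n(f'(h)+\g{N}{K}H_1)$ from Lemma\rl{lemma3.1}, and use the $k=1$ case of the reduced Minkowski identity to arrive at
\[
n\int_{\Sigma}(H_1^2-H_2)\g{N}{K}d\m=\int_{\Sigma}f(h)\g{\nabla H_1}{\nabla h}d\m,
\]
whose left-hand side is already nonpositive. To upgrade this to an equality, I would produce a second identity from $\mathrm{div}(H_1 P_1\nabla g(h))=H_1L_1(g(h))+f(h)\g{\nabla H_1}{P_1\nabla h}$, which is usable precisely because $P_1$ is now divergence-free; substituting $L_1(g(h))$ from Lemma\rl{lemma3.1} and invoking the $k=0$ Minkowski identity yields a companion relation. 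Combining the two should force $\int_{\Sigma}(H_1^2-H_2)(-\g{N}{K})d\m=0$; since both factors of the integrand are nonnegative, this gives $H_1^2\equiv H_2$, and hence \m\ is totally umbilical. This is the step I expect to require the most delicate bookkeeping, as the constant-curvature hypothesis has to be used a second time (again through $\kappa/f^2-(\log f)''=0$) to make the two divergence identities compatible.

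Finally, once \m\ is totally umbilical, I would invoke the classification of compact totally umbilical spacelike hypersurfaces in constant-curvature RW spacetimes. If $\bar\kappa\leq 0$ the only possibilities are slices $\{t\}\times M^n$, with $f'(t)\neq 0$ since the totally geodesic slices are incompatible with $H_2>0$. If $\bar\kappa>0$ the ambient is locally a de Sitter spacetime, and the only additional possibility is a round umbilical hypersphere. The main obstacles in the whole plan are thus, in order, identifying the correct pairing of divergence identities in the umbilicity step and controlling the classification case split between $\bar\kappa\leq 0$ and $\bar\kappa>0$.
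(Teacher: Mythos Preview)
Your reduction of the hypothesis to ``$H_2>0$ constant'' and your observation that constant sectional curvature forces $\kappa/f^2(h)-(\log f)''(h)\equiv 0$, collapsing all Minkowski formulae \rf{MF1}, \rf{MF2}, \rf{MFk} to
\[
\int_{\Sigma}\bigl(f'(h)H_k+\g{N}{K}H_{k+1}\bigr)\,d\m=0,
\]
are correct and match what the paper uses. The final classification step is also fine: once \m\ is totally umbilical, $H_1^2=H_2$ is a positive constant and Montiel's classification \cite[Theorem~5]{Mo} finishes the argument exactly as you say.

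The gap is in your umbilicity step. Your first divergence identity indeed gives
\[
n\int_{\Sigma}(H_1^2-H_2)\g{N}{K}\,d\m=\int_{\Sigma}f(h)\g{\nabla H_1}{\nabla h}\,d\m,
\]
but the companion identity from $\mathrm{div}(H_1P_1\nabla g(h))$ produces, after substituting $L_1(g(h))$, the term $-n(n-1)\int_\Sigma H_1^2 f'(h)\,d\m$ together with $\int_\Sigma f(h)\g{\nabla H_1}{P_1\nabla h}\,d\m$. The $k=0$ Minkowski formula controls $\int f'(h)$, not $\int H_1^2 f'(h)$, and since $H_1$ is not constant a priori you cannot pull it out. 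There is no evident linear combination of your two identities that kills both the $\int H_1^2 f'(h)$ term and the two distinct gradient terms $\int f(h)\g{\nabla H_1}{\nabla h}$ and $\int f(h)\g{\nabla H_1}{P_1\nabla h}$. As written, the plan does not close.

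The paper does not prove Theorem\rl{montielTh8} directly (it is quoted from \cite{Mo}), but the proof of the more general Theorem\rl{th} specialises immediately to this case and bypasses your difficulty entirely. Instead of pairing $k=0$ with $k=1$ and fighting the nonconstancy of $H_1$, one pairs $k=0$ with $k=2$: multiply the first Minkowski formula by the \emph{constant} $H_2$ and subtract the third to obtain
\[
\int_{\Sigma}(H_1H_2-H_3)\g{N}{K}\,d\m=0.
\]
The Newton inequality $H_1H_2\geq H_3$ (with equality only at umbilical points, under $H_2>0$) then forces umbilicity in one stroke, since $\g{N}{K}<0$. The moral is that the right quadratic to target is $H_1H_2-H_3$, not $H_1^2-H_2$, precisely because $H_2$ is the quantity assumed constant.
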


As a first application of our Minkowski formulae in Theorem\rl{thMF}, we extend this result to the case of hypersurfaces
immersed into a RW spacetime obeying the null convergence condition (NCC). Recall that a spacetime obeys NCC if its
Ricci curvature is non-negative on null (or lightlike) directions. Obviously, every spacetime with constant sectional
curvature trivially obeys NCC. Using expression \rf{ricci}, it is not difficult to see that a GRW spacetime \grw\ obeys
NCC if and only if
\beq
\label{NCC}
\mathrm{Ric}_{M}\geq (n-1)\sup_I(ff''-f'^2)\g{}{}_{M},
\eeq
where $\mathrm{Ric}_{M}$ and $\g{}{}_{M}$ are respectively the Ricci and metric tensors of the Riemannian manifold $M^n$.
In particular, a RW spacetime obeys NCC if and only if
\beq
\label{NCCbis}
\kappa\geq\sup_I(ff''-f'^2),
\eeq
where $\kappa$ denotes the constant sectional curvature of $M^n$. On the other hand, the condition that the scalar
curvature $S$ of \m\ is constant and less that $n(n-1)\bar{\kappa}$ in Theorem\rl{montielTh8} is equivalent, from
equation \rf{scalar}, to the fact that $H_2$ is a positive constant. Then, Theorem\rl{montielTh8} admits the following
extension.
\begin{theorem}
\label{th}
Let \grw\ be a spatially closed RW spacetime obeying the null convergence condition, with $n\geq 3$.
Then every compact spacelike hypersurface immersed into \grw\ with positive constant $H_2$ is totally umbilical. Moreover,
\m\ must be a slice $\{t_0\}\times M^n$ (necessarily with $f'(t_0)\neq 0$), unless in the case where \grw\ has positive
constant sectional curvature and \m\ is a round umbilical hypersphere.
The latter case cannot occur if the inequality in \rf{NCCbis} is strict.
\end{theorem}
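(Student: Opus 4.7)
My plan is to combine the third Minkowski formula from Theorem\rl{thMF} (valid since $n\geq 3$) with the null convergence condition and the Newton-type pointwise inequality $H_1H_2\geq H_3$ to force \m\ to be totally umbilical, and then to finish with the known classification of compact umbilical (equivalently, constant-mean-curvature) spacelike hypersurfaces under NCC.

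First I would do the setup: since $H_2>0$ is constant, Newton's inequality $H_1^2\geq H_2>0$ shows that $H_1$ is nowhere zero, so flipping $N$ if needed we may assume $H_1>0$ throughout \m. Lemma\rl{lemafernanda} then gives that $P_1$ is positive definite and $L_1$ is elliptic. The assumption \rf{NCCbis} amounts to the pointwise inequality $\kappa/f^2(h)-(\log f)''(h)\geq 0$ on \m. Taking $k=2$ in Theorem\rl{thMF}, the bracket on the right-hand side is $\geq 0$, the conformal factor $\g{N}{K}\leq -f(h)<0$, and $\g{P_1\nabla h}{\nabla h}\geq 0$ because $P_1$ is positive definite; hence
\[
\int_\m(f'(h)H_2+\g{N}{K}H_3)\,d\m \leq 0 .
\]
Multiplying the first Minkowski formula \rf{MF1} by the constant $H_2$ and subtracting it from this inequality then yields the key bound
\[
\int_\m \g{N}{K}(H_3-H_1H_2)\,d\m \leq 0 .
\]

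Next I would exploit the pointwise inequality $H_1H_2\geq H_3$, with equality precisely at umbilical points. This follows from Newton's inequalities $H_2^2\geq H_1H_3$ and $H_1^2\geq H_2$ together with $H_1>0$, via
\[
H_1H_2-H_3 \geq H_1H_2-\frac{H_2^2}{H_1}=\frac{H_2(H_1^2-H_2)}{H_1}\geq 0,
\]
and the equality case of Newton's inequality. Since $-\g{N}{K}>0$ everywhere, the integrand $(-\g{N}{K})(H_1H_2-H_3)$ is non-negative, so it must vanish identically; hence $H_1H_2=H_3$ on \m, and \m\ is totally umbilical. Umbilicity together with $H_2$ constant forces the single principal curvature to be constant, so \m\ has constant mean curvature $H_1=\sqrt{H_2}$. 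At this point I would invoke Montiel's classification of compact CMC spacelike hypersurfaces in a GRW spacetime obeying NCC (Theorem 7 of \cite{Mo}) to conclude that \m\ is either a slice $\{t_0\}\times M^n$ with $f'(t_0)\neq 0$ or a round umbilical hypersphere sitting in a de Sitter space. The latter case requires the ambient to have constant sectional curvature, which is equivalent to equality in \rf{NCCbis} at the relevant heights; hence it is ruled out when \rf{NCCbis} is strict.

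The main obstacle is conceptual rather than computational: one has to identify the correct Minkowski formula to use --- the third, not the second --- so that the subtraction of an $H_2$-multiple of the first Minkowski identity produces an integrand controlled by the non-negative quantity $H_1H_2-H_3$ whose vanishing characterizes umbilicity. Once this combination is in hand, the remainder of the proof is a routine application of Newton's inequality followed by the existing CMC classification.
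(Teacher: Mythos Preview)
Your proof is correct and follows essentially the same route as the paper: combine the first Minkowski formula (multiplied by the constant $H_2$) with the third Minkowski formula \rf{MFk} for $k=2$, use the positivity of $P_1$ from Lemma\rl{lemafernanda} together with NCC and the Newton inequality $H_1H_2\geq H_3$ to force umbilicity, and then finish with Montiel's classification of compact CMC spacelike hypersurfaces. The only cosmetic differences are that the paper packages the combination as a single vanishing integral (equation \rf{here}) rather than an inequality, and the Montiel result you want is his Theorem~5 (or Theorem~6, restated here as Theorem\rl{th22}) rather than Theorem~7.
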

\begin{proof}
Multiplying the first Minkowski formula \rf{MF1} by the constant $\binom{n}{2}H_2$, and substracting the third Minkowski
formula (formula \rf{MFk} with $k=2$), we obtain
\beq
\label{here}
\int_{\Sigma}
\left(\bin{n}{2}(H_1H_{2}-H_3)+\left(\frac{\kappa}{f^2(h)}-(\log f)''(h)\right)\g{P_{1}\nabla h}{\nabla h}\right)\g{N}{K}d\m=0.
\eeq
Since $H_2>0$ we know from Lemma\rl{lemafernanda} that, for an appropriate choice of the Gauss map $N$, the
transformation $P_1$ is positive definite and $H_1>0$ on \m. Since we always have $H^2_2-H_1H_3\geq 0$ (see, for instance,
\cite[Theorem 51 and Theorem 144]{HLP}), then in our case we get
\[
H_1H_{2}-H_3\geq\frac{H_2}{H_1}(H_1^2-H_2)\geq 0,
\]
by Cauchy-Schwarz inequality, with equality if and only if \m\ is totally umbilical. On the other hand, since $P_1$ is
positive definite, we also have by NCC \rf{NCCbis} that
\[
\left(\frac{\kappa}{f^2(h)}-(\log f)''(h)\right)\g{P_{1}\nabla h}{\nabla h}\geq 0.
\]
Therefore, since $\g{N}{K}$ does not vanish on \m, we conclude from \rf{here} that \m\ is totally umbilical and that
\[
\left(\frac{\kappa}{f^2(h)}-(\log f)''(h)\right)\g{P_{1}\nabla h}{\nabla h}\equiv 0 \quad \mathrm{on} \quad \m.
\]
Since $P_1$ is positive definite, it follows from here that either \m\ is a slice (equivalently, $\nabla h\equiv 0$) or
\beq
\label{k1}
\frac{\kappa}{f^2(h)}-(\log f)''(h)\equiv 0 \quad \mathrm{on} \quad \m.
\eeq

First of all, observe that if \m\ is a slice $\{t_0\}\times M^n$, then necessarily $f'(t_0)\neq 0$ because of the condition
$H_2>0$ (recall that slices with $f'(t_0)=0$ are totally geodesic).
On the other hand, if \m\ is not a slice, since it we already know that it is totally umbilical we have that $H_2=H_1^2$, which implies that \m\ has also constant mean curvature. Therefore, applying the classification of
compact umbilical spacelike hypersurfaces with constant mean curvature given by Montiel in \cite[Theorem 5]{Mo}, we
conclude that the only case in which \m\ is not a slice occurs only when \grw\ has positive
constant sectional curvature and \m\ is a round umbilical hypersphere.
Finally, observe that if the inequality in \rf{NCCbis} is strict, then \rf{k1} cannot occur and \m\ is necessarily a slice.
\end{proof}

For the general case of hypersurfaces with constant higher order mean curvature, we can state the following version
of Theorem\rl{th}.
\begin{theorem}
\label{thbis}
Let \grw\ be a spatially closed RW spacetime obeying the null convergence condition, with $n\geq 3$.
Assume that $\m^n$ is a compact spacelike hypersurface immersed into \grw\ which is contained in a slab
$\Omega(t_1,t_2)\subset\grw$ on which
$f'$ does not vanish. If $H_{k}$ is constant, with $3\leq k\leq n$ then \m\ is totally umbilical. Moreover,
\m\ must be a slice $\{t_0\}\times M^n$ (necessarily with $f'(t_0)\neq 0$), unless in the case where \grw\ has positive
constant sectional curvature and \m\ is a round umbilical hypersphere.
The latter case cannot occur if the inequality in \rf{NCCbis} is strict.
\end{theorem}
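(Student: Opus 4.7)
The plan is to mimic the proof of Theorem\rl{th}, replacing the third Minkowski formula by the $(k+1)$-th one in \rf{MFk}. The argument rests on three ingredients: an elliptic point, the positive definiteness of $P_{k-1}$, and the chain of Newton inequalities. Since \m\ sits in a slab where $f'$ does not vanish, Lemma\rl{lemaellipticpoint} produces an elliptic point of \m\ for an appropriate choice of Gauss map $N$; at this point every principal curvature is negative, so $H_k>0$ there, and constancy of $H_k$ promotes this to $H_k>0$ on all of \m. Applying Lemma\rl{lemaCR} with its internal index set to $k-1\in\{2,\ldots,n-1\}$ (which uses the assumption $k\in\{3,\ldots,n\}$), each $P_j$ with $1\leq j\leq k-1$ is positive definite, and the trace identity \rf{trPk} then gives $H_j>0$ for every $0\leq j\leq k$.

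Next, multiply the first Minkowski formula \rf{MF1} by the constant \bin{n}{k}$H_k$ and subtract the result from \rf{MFk}. Constancy of $H_k$ cancels the $f'(h)H_k$ contributions, producing
\[
\int_{\Sigma}\langle N,K\rangle\Bigl[\bin{n}{k}(H_1H_k-H_{k+1})+\bigl(\frac{\kappa}{f^2(h)}-(\log f)''(h)\bigr)\langle P_{k-1}\nabla h,\nabla h\rangle\Bigr]d\m=0.
\]
The null convergence condition \rf{NCCbis} is equivalent to $\kappa/f^2(h)-(\log f)''(h)\geq 0$, so the second summand in the bracket is non-negative by positive definiteness of $P_{k-1}$. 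With $H_0,\ldots,H_k$ all positive, the Newton inequality chain $H_1\geq H_2/H_1\geq\cdots\geq H_{k+1}/H_k$ yields $H_1H_k\geq H_{k+1}$, with equality at a point if and only if \m\ is umbilical there. Since $\langle N,K\rangle<0$ on \m, the integrand has constant sign, so the vanishing of the integral forces $H_1H_k\equiv H_{k+1}$ (whence \m\ is totally umbilical) and $(\kappa/f^2(h)-(\log f)''(h))\langle P_{k-1}\nabla h,\nabla h\rangle\equiv 0$.

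Positive definiteness of $P_{k-1}$ then leaves only two possibilities: either $\nabla h\equiv 0$, in which case \m\ is a slice $\{t_0\}\times M^n$ with $f'(t_0)\neq 0$ because \m\ lies in the given slab; or $\kappa/f^2(h)-(\log f)''(h)\equiv 0$ on \m. In the latter case, total umbilicity together with $H_k$ a positive constant and the common sign of the principal curvatures inherited by continuity from the elliptic point force $H_1$ to be a positive constant, so Montiel's classification \cite[Theorem 5]{Mo} --- invoked exactly as in the final paragraph of the proof of Theorem\rl{th} --- identifies \m\ as a round umbilical hypersphere in a spacetime of positive constant sectional curvature. A strict inequality in \rf{NCCbis} makes $\kappa/f^2-(\log f)''>0$ pointwise on $I$ and rules out the second alternative altogether. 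The main obstacle in executing this plan is converting the single hypothesis ``$H_k$ constant'' into positive definiteness of the auxiliary operator $P_{k-1}$, which is precisely what the tandem Lemma\rl{lemaellipticpoint}--Lemma\rl{lemaCR} accomplishes once $f'$ is assumed nowhere zero on the slab.
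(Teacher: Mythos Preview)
Your argument is essentially the paper's proof of Theorem\rl{thbiss}, not of Theorem\rl{thbis}, and it shares that theorem's limitation: it does not cover the endpoint $k=n$. The $(k+1)$-th Minkowski formula \rf{MFk} is stated and derived only for $2\leq k\leq n-1$; for $k=n$ the operator $L_n$ vanishes identically (since $P_n=0$ by Cayley--Hamilton) and the formula degenerates to $0=0$. Thus your combination of \rf{MF1} with \rf{MFk} is unavailable precisely when $k=n$, leaving that case of the theorem unproved.

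The paper's own proof of Theorem\rl{thbis} avoids this by using the $k$-th Minkowski formula instead (that is, \rf{MFk} with index $k-1$, valid for $2\leq k-1\leq n-1$, i.e.\ $3\leq k\leq n$). The price is that one no longer gets a clean ``sum of two non-negative terms times $\langle N,K\rangle$'' integrand; instead one integrates the pointwise inequality $f'(h)H_{k-1}\geq f'(h)H_k^{(k-1)/k}$ coming from the G{\aa}rding chain \rf{garding}, using crucially that $f'(h)>0$ on the slab (not merely that $f'$ is nonzero there to produce an elliptic point). This yields $\int_\Sigma(H_1-H_k^{1/k})\langle N,K\rangle\,d\Sigma\geq 0$, forcing $H_1\equiv H_k^{1/k}$ and hence total umbilicity; the classification is then read off from Theorem\rl{th}. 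So your approach is correct and self-contained for $3\leq k\leq n-1$ (and even slightly cleaner, since it extracts the vanishing of the curvature term directly), but to reach $k=n$ you must drop one order in the Minkowski formula and exploit the sign of $f'$ as the paper does.
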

\begin{proof}
The proof follows the ideas of that of Theorem 5.10 in \cite{ABC}. Let us assume,
for instance, that $f'>0$ on $(t_1,t_2)$. From Lemma\rl{lemaellipticpoint}, we know
that, with respect to the future-pointing Gauss map $N$, there exists a point
$p_0\in\m$ where all the principal curvatures are negative. Therefore, the constant
$H_k=H_k(p_0)$ is positive, and following the ideas of Montiel and Ros in
\cite[Lemma 1]{MR} and their use of Garding inequalities \cite{Ga} (taking into
account our sign convention in the definition of the $j$-th mean curvatures) we get
that \beq \label{garding} H_1\geq H_2^{1/2}\geq\cdots\geq H_{k-1}^{1/(k-1)}\geq
H_{k}^{1/k}>0 \quad \mathrm{on} \quad \m. \eeq Moreover, equality at any stage
happens only at umbilical points. Therefore, since $f'(h)>0$ on \m, we have that
$f'(h)H_{k-1}\geq f'(h)H_{k}^{(k-1)/k}$. Integrating this inequality, and using the
first \rf{MF1} and the $k$-th Minkowski formulae (that is, \rf{MFk} with $k-1$
instead of $k$; observe that $2\leq k-1\leq n-1$), we get
\begin{eqnarray*}
H_k\int_{\Sigma}\g{N}{K}d\m=-\int_{\Sigma}f'(h)H_{k-1}d\m\leq \\
-H_{k}^{(k-1)/k}\int_{\Sigma}f'(h)d\m=
H_k^{(k-1)/k}\int_{\Sigma}\g{N}{K}H_1d\m.
\end{eqnarray*}
That is,
\beq
\label{INE}
\int_{\Sigma}(H_1-H_k^{1/k})\g{N}{K}d\m\geq 0.
\eeq
Since $N$ is future-pointing, we know that $\g{N}{K}\leq-f(h)<0$, and by \rf{garding} we have $H_1-H_k^{1/k}\geq 0$. Therefore, \rf{INE} implies that $H_1-H_k^{1/k}\equiv 0$ on \m, and hence the hypersurface is totally umbilical. Since $H_k$ is a positive constant and \m\ is totally umbilical, we know that all the higher order mean curvatures are constant. In particular, $H_2$ is a positive constant and the result follows by applying Theorem\rl{th}.
\end{proof}

When $n\geq 4$ and $H_k$ is constant with $k\leq n-1$, there is still another version of Theorem\rl{thbis} under the
assumption of existence of an elliptic point.
\begin{theorem}
\label{thbiss}
Let \grw\ be a spatially closed RW spacetime obeying the null convergence condition, with $n\geq 4$.
Assume that $\m^n$ is a compact spacelike hypersurface immersed into \grw\ which contains an elliptic point. If $H_{k}$ is constant, with $3\leq k\leq n-1$ then \m\ is totally umbilical. Moreover,
\m\ must be a slice $\{t_0\}\times M^n$ (necessarily with $f'(t_0)\neq 0$), unless in the case where \grw\ has positive
constant sectional curvature and \m\ is a round umbilical hypersphere. The latter case cannot occur if the inequality in \rf{NCCbis} is strict.
\end{theorem}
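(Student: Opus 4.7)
My plan is to adapt the argument of Theorem\rl{thbis} by replacing its slab hypothesis on $f'$ with the elliptic point hypothesis (via Lemma\rl{lemaCR}), and by replacing the G\aa rding-type estimate on $f'(h) H_{k-1}$ (which there exploited $f' > 0$ pointwise) with a direct cancellation of the $f'(h)$-terms between two Minkowski identities, made possible by the constancy of $H_k$.

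First, I would use the elliptic point to select an appropriate Gauss map $N$ for which all principal curvatures at some $p_0 \in \m$ are negative; consequently $H_j(p_0) > 0$ for every $j$, and since $H_k$ is constant, $H_k > 0$ on all of \m. Then Lemma\rl{lemaCR}, applied with its parameter equal to $k-1$ (which lies in $[2, n-2]$ since $3 \leq k \leq n-1$), yields the positive definiteness of $P_j$ on \m\ for all $1 \leq j \leq k-1$; by the same G\aa rding-cone argument used in the proof of Theorem\rl{thbis}, one also obtains $H_j > 0$ on \m\ for all $1 \leq j \leq k$.

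Second, I would multiply the first Minkowski formula \rf{MF1} by the constant $\binom{n}{k} H_k$ and subtract the result from the $(k+1)$-th Minkowski formula \rf{MFk}. Because $H_k$ is constant, the $f'(h)$-contributions cancel, producing
\[
\binom{n}{k} \int_{\Sigma} \g{N}{K} (H_{k+1} - H_1 H_k) \, d\m = \int_{\Sigma} \left( \frac{\kappa}{f^2(h)} - (\log f)''(h) \right) \g{N}{K} \g{P_{k-1} \nabla h}{\nabla h} \, d\m.
\]
The classical Newton inequalities, valid since $H_1, \ldots, H_k > 0$, yield the chain $H_1 \geq H_2/H_1 \geq \cdots \geq H_{k+1}/H_k$, whence $H_{k+1} \leq H_1 H_k$ pointwise on \m, with equality at a point forcing umbilicity at that point (via the $j=1$ case $H_1^2 = H_2$). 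Together with $\g{N}{K} \leq -f(h) < 0$, this makes the left-hand side non-negative. On the right, the null convergence condition \rf{NCCbis}, the positive definiteness of $P_{k-1}$, and the sign of $\g{N}{K}$ jointly force the integrand to be non-positive. Both integrals must therefore vanish, the left-hand integrand vanishes pointwise, and $H_{k+1} \equiv H_1 H_k$ on \m, giving the total umbilicity of the hypersurface.

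Once \m\ is umbilical, $H_2 = H_1^2$ is a positive constant, and Theorem\rl{th} applies directly to yield the slice/round-hypersphere dichotomy, with the exceptional hypersphere case excluded under strict inequality in \rf{NCCbis}. The hard part of the argument is securing the positive definiteness of $P_{k-1}$ needed to sign the right-hand side of the integral identity above, in the absence of any slab hypothesis on $f'$; this is exactly the role of the elliptic point assumption through Lemma\rl{lemaCR}.
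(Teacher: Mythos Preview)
Your proposal is correct and follows essentially the same route as the paper's own proof: combine the first Minkowski formula (scaled by the constant $\binom{n}{k}H_k$) with the $(k+1)$-th Minkowski formula \rf{MFk}, use the elliptic point via Lemma\rl{lemaCR} to make $P_{k-1}$ positive definite, invoke the Newton inequalities to get $H_1H_k-H_{k+1}\geq 0$, and then use NCC together with the sign of $\g{N}{K}$ to force umbilicity and finish via Theorem\rl{th}. Your framing of this as an adaptation of Theorem\rl{thbis} is slightly off (the paper itself describes the argument as following Theorem\rl{th}), but the actual content of your proof matches the paper's.
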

\begin{proof}
The proof follows the ideas of that of Theorem\rl{th}. Multiplying now the first Minkowski formula \rf{MF1} by the
constant $\binom{n}{k}H_{k}$, and substracting the $(k+1)$-th Minkowski formula (formula \rf{MFk}; recall that
$3\leq k\leq n-1$)), we obtain
\beq
\label{here2}
\int_{\Sigma}
\left(\bin{n}{k}(H_1H_{k}-H_{k+1})+\left(\frac{\kappa}{f^2(h)}-(\log f)''(h)\right)\g{P_{k-1}\nabla h}{\nabla h}\right)\g{N}{K}d\m=0.
\eeq
Assume that $p_0\in\m$ is an elliptic point, that is, a point where all the principal curvatures are negative. Therefore,
the constant $H_k=H_k(p_0)$ is positive, and
following the ideas of Montiel and Ros in \cite[Lemma 1]{MR} and their use of Garding inequalities \cite{Ga}, as in the proof of Theorem\rl{thbis}, we get \rf{garding}, with equality at any stage only at umbilical points. On the other hand, for every $1\leq j\leq n$ one has
the following generalized Cauchy-Schwarz inequalities (see, for instance,
\cite[Theorem 51 and Theorem 144]{HLP}),
\[
H_{j-1}H_{j+1}\leq H_{j}^2.
\]
Since each $H_j>0$ for $j=1,\ldots,k$ by \rf{garding}, this is equivalent to
\[
\frac{H_{k+1}}{H_{k}}\leq\frac{H_{k}}{H_{k-1}}\leq\cdots\frac{H_{2}}{H_{1}}\leq H_1.
\]
But this implies
\beq
\label{garding2}
H_1H_k-H_{k+1}\geq 0,
\eeq
with equality if and only if \m\ is totally umbilical.

On the other hand, by Lemma\rl{lemaCR} we also know that the transformation $P_{k-1}$ is positive definite on \m, which
jointly with \rf{NCCbis} yields
\[
\left(\frac{\kappa}{f^2(h)}-(\log f)''(h)\right)\g{P_{k-1}\nabla h}{\nabla h}\geq 0.
\]
Therefore, since $\g{N}{K}$ does not vanish on \m, we conclude from \rf{here2} that \m\ is totally umbilical and that
\[
\left(\frac{\kappa}{f^2(h)}-(\log f)''(h)\right)\g{P_{k-1}\nabla h}{\nabla h}\equiv 0 \quad \mathrm{on} \quad \m.
\]
The proof then finishes as that of Theorem\rl{th}.
\end{proof}

\section{The operator $L_k$ acting on the function $\g{N}{K}$}
\label{s8}
In this section we will compute the operator $L_k$ acting on the function $\g{N}{K}$. From \rf{nablaK} we see that
\[
X(\g{N}{K})=-\g{A(X)}{K}=-\g{X}{A(K^\top)}
\]
for every vector field $X\in\xm$, so that
\[
\nabla\g{N}{K}=-A(K^\top).
\]
Therefore,
\beq
\label{hessNK1}
\nabla_X(\nabla\g{N}{K})=-\nabla_X(A(K^\top))=-(\nabla_XA)(K^\top))-A(\nabla_XK^\top),
\eeq
where $\nabla_XA$ denotes the covariant derivative of $A$,
\[
(\nabla_XA)(Y)=\nabla A(Y,X)=\nabla_X(AY)-A(\nabla_XY), \quad X,Y\in\xm.
\]
Recall now that the Codazzi equation of a hypersurface describes the normal
component of $\overline{R}(X,Y)Z$, for tangent vector fields $X,Y,Z\in\xm$, in terms of the derivative of the shape
operator. Specifically, the Codazzi equation of a spacelike hypersurface immersed into an arbitrary ambient spacetime is
given by
\[
\g{\overline{R}(X,Y)Z}{N}=\g{(\nabla_YA)X-(\nabla_XA)Y}{Z}
\]
or, equivalently,
\beq
\label{codazzi2}
(\overline{R}(X,Y)N)^\top=(\nabla_XA)Y-(\nabla_YA)X.
\eeq
Then, using  \rf{nablaKT} and \rf{codazzi2} into \rf{hessNK1}, we obtain that
\[
\nabla_X(\nabla\g{N}{K})=
-(\nabla_{K^\top}A)(X))-(\Ro(X,K^\top)N)^\top-f'(h)A(X)+\g{N}{K}A^2(X).
\]
This implies
\begin{eqnarray}
\label{LkNK1}
\nonumber L_k(\g{N}{K}) & = &
-\mathrm{tr}((\nabla_{K^\top}A)\circ P_{k})
-\sum_{i=1}^{n}\g{\overline{R}(E_i,K^\top)N}{P_k(E_i)} \\
\nonumber {} & {} &
-f'(h)\mathrm{tr}(A\circ P_k)+\g{N}{K}\mathrm{tr}(A^2\circ P_k)\\
{} & = &
\bin{n}{k+1}\g{\nabla H_{k+1}}{K}-
\sum_{i=1}^{n}\g{\overline{R}(E_i,K^\top)N}{P_k(E_i)} \\
\nonumber {} & {} &
+f'(h)c_kH_{k+1}+\bin{n}{k+1}\g{N}{K}(nH_1H_{k+1}-(n-k-1)H_{k+2}),
\end{eqnarray}
where $\{ E_1, \ldots, E_n \}$ is an arbitrary local orthonormal frame on \m. Here we are using the general fact that
\[
\mathrm{tr}(\nabla_XA\circ P_k)=\mathrm{tr}(P_k\circ \nabla_XA)=
-\bin{n}{k+1}\g{\nabla H_{k+1}}{X}
\]
for every tangent vector field $X\in\xm$ (see equation (3.6) in \cite{ABC}).

In order to write equation \rf{LkNK1} in a more suitable way, observe that from \rf{KT} we can
express
\beq
\label{Ro1}
\Ro(X,K^\top)N=\Ro(X,K)N-\g{N}{K}\Ro(N,X)N 
\eeq
for every tangent vector field $X\in\xm$. On the other hand, since $K^*=0$, it follows from \rf{curvatura} that
\[
\Ro(X,K)N=-\frac{f''(h)}{f(h)}\g{N}{K}X
\]
for every tangent vector field $X\in\xm$, so that \rf{Ro1} becomes
\[
\Ro(X,K^\top)N=-\g{N}{K}\left(\frac{f''(h)}{f(h)}X+\Ro(N,X)N\right).
\]
Therefore,
\begin{eqnarray}
\label{A}
\nonumber \sum_{i=1}^{n}\g{\overline{R}(E_i,K^\top)N}{P_kE_i}=
-\g{N}{K}\left(\frac{f''(h)}{f(h)}\mathrm{tr}(P_k)+\mathrm{tr}(P_k\circ\Ro_N)\right)\\
=-\g{N}{K}\left(\frac{f''(h)}{f(h)}c_kH_k+\mathrm{tr}(P_k\circ\Ro_N)\right),
\end{eqnarray}
where $\Ro_N:\xm\rightarrow\xm$ is the operator given by $\Ro_N(X)=(\Ro(N,X)N)^\top$.
Using \rf{A} in \rf{LkNK1} we obtain the following result.
\begin{lemma}
\label{LkNK}
Let $\m^n$ be a spacelike hypersurface immersed into a GRW spacetime \grw, with Gauss map $N$ and height function $h$. Then,
for every $k=0,\ldots, n-1$ we have
\begin{eqnarray}
\label{Lk(NK)1}
L_{k}(\g{N}{K}) & = & \bin{n}{k+1}\g{\nabla H_{k+1}}{K}+f'(h)c_{k}H_{k+1}\\
\nonumber {} & {} & +\bin{n}{k+1}\g{N}{K}(nH_1H_{k+1}-(n-k-1)H_{k+2}) \\
\nonumber {} & {} & +\g{N}{K}\left(\mathrm{tr}(P_{k}\circ\Ro_N)+\frac{f''(h)}{f(h)}c_{k}H_{k}\right),
\end{eqnarray}
where $\Ro_N:\xm\rightarrow\xm$ is the operator given by $\Ro_N(X)=(\Ro(N,X)N)^\top$.
\end{lemma}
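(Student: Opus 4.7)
The plan is to compute $L_k(\g{N}{K})=\mathrm{tr}(P_k\circ\nabla^2\g{N}{K})$ by first determining the gradient and Hessian of the scalar function $\g{N}{K}$ on $\m$, and then tracing against $P_k$, reducing the resulting ambient curvature term by exploiting the warped-product structure of \grw.

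First I would compute $\nabla\g{N}{K}$. Using $\nablao_ZK=f'(h)Z$ from \rf{nablaK}, one finds $X(\g{N}{K})=-\g{AX}{K}=-\g{X}{A(K^\top)}$ for every tangent $X$, so $\nabla\g{N}{K}=-A(K^\top)$. Differentiating again, the Leibniz rule gives $\nabla_X(\nabla\g{N}{K})=-(\nabla_XA)(K^\top)-A(\nabla_XK^\top)$. Inserting \rf{nablaKT} into the second term, and using the Codazzi equation \rf{codazzi2} in the form $(\nabla_XA)(K^\top)=(\nabla_{K^\top}A)(X)+(\Ro(X,K^\top)N)^\top$ in the first, produces
\[
\nabla^2\g{N}{K}(X)=-(\nabla_{K^\top}A)(X)-(\Ro(X,K^\top)N)^\top-f'(h)AX+\g{N}{K}A^2X.
\]

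Next I would trace against $P_k$. Applying the identity $\mathrm{tr}(P_k\circ\nabla_XA)=-\bin{n}{k+1}\g{\nabla H_{k+1}}{X}$ together with \rf{trAPk} and \rf{trA2Pk} reduces the three non-curvature pieces to exactly
\[
\bin{n}{k+1}\g{\nabla H_{k+1}}{K},\quad f'(h)c_kH_{k+1},\quad \bin{n}{k+1}\g{N}{K}\bigl(nH_1H_{k+1}-(n-k-1)H_{k+2}\bigr),
\]
which is the content of equation \rf{LkNK1} of the excerpt.

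The main obstacle is the curvature term $-\sum_i\g{\Ro(E_i,K^\top)N}{P_kE_i}$. To handle it I would use \rf{KT} to split $K^\top=K+\g{N}{K}N$, giving $\Ro(X,K^\top)N=\Ro(X,K)N-\g{N}{K}\Ro(N,X)N$. The key simplification is that $K=f(h)\partial_t$ is purely vertical, so $K^*=0$; feeding $V=K$ into the warped-product curvature identity \rf{curvatura} kills the $R_M$-term, and a short check shows that the two $\partial_t$-coefficient contributions cancel against each other, leaving the tangent vector $\Ro(X,K)N=-(f''(h)/f(h))\g{N}{K}X$. Tracing against $P_k$ then produces
\[
-\sum_{i=1}^n\g{\Ro(E_i,K^\top)N}{P_kE_i}=\g{N}{K}\left(\frac{f''(h)}{f(h)}c_kH_k+\mathrm{tr}(P_k\circ\Ro_N)\right),
\]
using \rf{trPk} for the $c_kH_k$ factor and the definition $\Ro_N(X)=(\Ro(N,X)N)^\top$. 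Combining this with the three pieces above assembles \rf{Lk(NK)1}. The delicate step is verifying the cancellation of the two $\partial_t$-coefficients arising from \rf{curvatura}; once that is in place, the remainder of the derivation is straightforward bookkeeping.
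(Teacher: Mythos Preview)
Your proposal is correct and follows essentially the same route as the paper: compute $\nabla\g{N}{K}=-A(K^\top)$, differentiate and apply Codazzi together with \rf{nablaKT} to obtain the Hessian, trace against $P_k$ using \rf{trAPk}, \rf{trA2Pk} and the identity $\mathrm{tr}(P_k\circ\nabla_XA)=-\bin{n}{k+1}\g{\nabla H_{k+1}}{X}$, and then split the curvature term via $K^\top=K+\g{N}{K}N$, using $K^*=0$ in \rf{curvatura} to reduce $\Ro(X,K)N$ to $-(f''(h)/f(h))\g{N}{K}X$. The cancellation of the two $\partial_t$-terms you flag is indeed the only point requiring care, and it goes through exactly as you describe.
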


Formula \rf{Lk(NK)1} becomes specially simple when $k=0$. In that case
$L_0=\Delta$ is the Laplacian operator on \m, and
\beq
\label{ricciN}
\mathrm{tr}(P_0\circ\Ro_N)=\mathrm{tr}(\Ro_N)=\Rico(N,N),
\eeq
where $\Rico$ is the Ricci tensor of \grw. Therefore, taking into account that
\beq
\label{Nt^2}
\g{N}{\partial_t}^2=1+\|\nabla h\|^2,
\eeq
we obtain from \rf{ricciN} and \rf{ricci} that
\beq
\label{C1}
\mathrm{tr}(\Ro_N)+n\frac{f''(h)}{f(h)}=\mathrm{Ric}_{M}(N^*,N^*)
-(n-1)(\log f)''(h)\|\nabla h\|^2.
\eeq
Thus, when $k=0$ expression \rf{Lk(NK)1} gives the following.
\begin{corollary}
Let $\m^n$ be a spacelike hypersurface immersed into a GRW spacetime, with Gauss map $N$ and height function $h$. Then
\begin{eqnarray}
\label{laplaciano}
\nonumber \Delta\g{N}{K} & = &
n\g{\nabla H}{K}+nf'(h)H+\g{N}{K}\|A\|^2 \\
{} & {} & +\g{N}{K}\left(\mathrm{Ric}_{M}(N^*,N^*)-(n-1)(\log f)''(h)\|\nabla h\|^2\right)
\end{eqnarray}
\end{corollary}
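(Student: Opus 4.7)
The plan is to specialize Lemma \rl{LkNK} to the case $k=0$, which reduces the statement to essentially algebraic bookkeeping. With $k=0$ one has $L_0 = \Delta$, $P_0 = I$, $H_0 = 1$, $c_0 = n$, $\binom{n}{k+1} = \binom{n}{1} = n$, and $H_1 = H$. Substituting these values directly into equation \rf{Lk(NK)1} will yield
\[
\Delta\g{N}{K} = n\g{\nabla H}{K} + nf'(h)H + n\g{N}{K}\bigl(nH^2 - (n-1)H_2\bigr) + \g{N}{K}\!\left(\mathrm{tr}(\Ro_N) + n\frac{f''(h)}{f(h)}\right).
\]

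Two identifications then finish the computation. First, for the shape-operator piece, I would invoke \rf{trA2Pk} with $k=0$, which gives $\mathrm{tr}(A^2 \circ P_0) = \mathrm{tr}(A^2) = n(nH^2 - (n-1)H_2)$; since $\mathrm{tr}(A^2) = \|A\|^2$, the third summand above collapses to $\g{N}{K}\|A\|^2$. Second, for the ambient curvature piece, I would apply the already-prepared identity \rf{C1}, itself obtained from the GRW Ricci decomposition \rf{ricci} combined with $\g{N}{\partial_t}^2 = 1 + \|\nabla h\|^2$ from \rf{Nt^2}, in order to rewrite
\[
\mathrm{tr}(\Ro_N) + n\frac{f''(h)}{f(h)} = \mathrm{Ric}_{M}(N^*,N^*) - (n-1)(\log f)''(h)\|\nabla h\|^2.
\]
Plugging both simplifications back into the displayed expression will reproduce formula \rf{laplaciano}.

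There is no real obstacle here: the corollary is a direct instance of Lemma \rl{LkNK} once one exploits the algebraic collapses that become available at $k=0$, namely $P_0 = I$, the reduction of $\mathrm{tr}(\Ro_N)$ to $\Rico(N,N)$ via \rf{ricciN}, and the identification of $\mathrm{tr}(A^2)$ with $\|A\|^2$. The only subtlety worth flagging is the bookkeeping of the $(-1)^k$ sign conventions in the definitions of the $H_k$ and $P_k$, which is automatic for $k=0$ since all such factors equal one.
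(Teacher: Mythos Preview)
Your proposal is correct and follows exactly the paper's own argument: specialize \rf{Lk(NK)1} to $k=0$, replace the curvature term via \rf{C1}, and identify $n^2H^2-n(n-1)H_2=\mathrm{tr}(A^2)=\|A\|^2$. The paper's one-line proof reads ``simply use \rf{C1} in \rf{Lk(NK)1} with $k=0$ and observe that $n^2H^2-n(n-1)H_{2}=\mathrm{tr}(A^2)=\|A\|^2$,'' which is precisely what you have outlined.
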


For a proof, simply use \rf{C1} in \rf{Lk(NK)1} with $k=0$ and observe that
$n^2H^2-n(n-1)H_{2}=\mathrm{tr}(A^2)=\|A\|^2$.

In order to find a better expression of \rf{Lk(NK)1} when $k\geq 1$,
observe that from \rf{curvatura} we also get that
\begin{eqnarray*}
\Ro(N,X)N & =  & R_{M}(N^*,X^*)N^*-
(((\log f)')^2(h)+(\log f)''(h)\g{N}{\partial_t}^2)X\\
{} & {} & +(\log f)''(h)\g{X}{\partial_t}\partial_t+
(\log f)''(h)\g{N}{\partial_t}\g{X}{\partial_t}N,
\end{eqnarray*}
for every tangent vector field $X\in\xm$. Using $\nabla h=-\partial_t^\top$ and \rf{Nt^2}, this implies
\begin{eqnarray*}
\Ro_N(X) & = & (R_{M}(N^*,X^*)N^*)^\top+(\log f)''(h)\g{X}{\nabla h}\nabla h\\
{} & {} & -(\frac{f''(h)}{f(h)}+(\log f)''(h)\|\nabla h\|^2)X.
\end{eqnarray*}
It follows from here that
\begin{eqnarray}
\label{C2}
\nonumber \mathrm{tr}(P_k\circ\Ro_N)+\frac{f''(h)}{f(h)}c_kH_k & = &
\sum_{i=1}^{n}\g{R_{M}(N^*,E_i^*)N^*}{P_kE_i}\\
{} & {} & -(\log f)''(h)\left(c_kH_k\|\nabla h\|^2-\g{P_k\nabla h}{\nabla h}\right),
\end{eqnarray}
where $\{ E_1, \ldots, E_n \}$ is an arbitrary local orthonormal frame on \m.

Let us consider first the case where either $n=2$ or $n\geq 3$ and $M^n$ has constant
sectional curvature. In both cases, we have that
\[
(R_{M}(N^*,X^*)N^*)^\top=
\kappa\g{N^*}{N^*}_{M}(X^*)^\top-\kappa\g{N^*}{X^*}_{M}(N^*)^\top,
\]
where $\kappa$ is either the (non-necessarily constant) Gaussian curvature of $M^2$ along
the immersion $\psi$ (when $n=2$) or the constant sectional curvature of
$M^n$ (when $n\geq 3$). Using now the equalities \rf{A1}, \rf{A3}, \rf{B1} and \rf{B3}, we obtain that
\[
(R_{M}(N^*,X^*)N^*)^\top=
\frac{\kappa}{f^2(h)}\left(\|\nabla h\|^2X-\g{X}{\nabla h}\nabla h\right),
\]
and \rf{C2} becomes
\beq
\label{C3}
\mathrm{tr}(P_k\circ\Ro_N)+\frac{f''(h)}{f(h)}c_kH_k=
\left(\frac{\kappa}{f^2(h)}-(\log f)''(h)\right)
\left(c_kH_k\|\nabla h\|^2-\g{P_k\nabla h}{\nabla h}\right).
\eeq
Using this expression into \rf{Lk(NK)1}, we obtain the following consequences.
\begin{corollary}
Let \xdos\ be a spacelike surface immersed into a 3-dimensional GRW spacetime, with Gauss map $N$ and height function $h$. Then,
\begin{eqnarray*}
L_1(\g{N}{K}) & = & \g{\nabla H_{2}}{K}+2f'(h)H_{2}+\g{N}{K}2H_1H_{2} \\
\nonumber {} & {} & -\g{N}{K}\left(\frac{K_{M}(\pi)}{f^2(h)}-(\log f)''(h)\right)
\g{A\nabla h}{\nabla h},
\end{eqnarray*}
where $\pi:\m^2\rightarrow M^2$ denotes the projection of $\m^2$ onto $M^2$,
$\pi=\pi_{M}\circ\psi$.
\end{corollary}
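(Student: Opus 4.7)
The plan is to derive this identity as a direct specialization of the general formula \rf{Lk(NK)1} of Lemma\rl{LkNK}, together with the auxiliary expression \rf{C3} for the trace term involving $\Ro_N$. The key observation is that \rf{C3} has already been established in the paragraph preceding the corollary under the hypothesis ``$n=2$, or ($n\geq 3$ with $M^n$ of constant sectional curvature)'', with $\kappa$ interpreted as the (not necessarily constant) Gaussian curvature $K_M(\pi)$ of the fiber $M^2$ along the immersion when $n=2$. This works because the Riemannian curvature tensor of any $2$-manifold is, algebraically at each point, of the constant sectional curvature form.

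First I would substitute $n=2$ and $k=1$ into \rf{Lk(NK)1} and collect the combinatorial factors: $\bin{2}{2}=1$, $c_1=(n-k)\bin{n}{k}=2$, and $n-k-1=0$, so the $H_{k+2}$ summand vanishes entirely. This leaves the preliminary expression
\[
L_1(\g{N}{K})=\g{\nabla H_2}{K}+2f'(h)H_2+2H_1H_2\g{N}{K}+\g{N}{K}\left(\mathrm{tr}(P_1\circ\Ro_N)+2H_1\frac{f''(h)}{f(h)}\right).
\]

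Next I would apply \rf{C3} to rewrite the bracketed term, taking $\kappa=K_M(\pi)$. In dimension $n=2$ one has $P_1=2H_1\,I+A$, whence $\g{P_1\nabla h}{\nabla h}=2H_1\|\nabla h\|^2+\g{A\nabla h}{\nabla h}$, while $c_1H_1\|\nabla h\|^2=2H_1\|\nabla h\|^2$. Therefore the quantity $c_1H_1\|\nabla h\|^2-\g{P_1\nabla h}{\nabla h}$ appearing in \rf{C3} collapses to $-\g{A\nabla h}{\nabla h}$, and substituting back produces precisely the stated identity.

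No step is genuinely difficult here: the corollary is a direct specialization of the two preceding displayed identities. The only delicate point is the justification of \rf{C3} in the present situation, where $\kappa=K_M(\pi)$ is not assumed constant, and this is already absorbed into the ``$n=2$'' branch of its derivation; so I do not expect any serious obstacle.
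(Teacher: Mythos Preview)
Your proposal is correct and follows exactly the paper's approach: the corollary is stated immediately after \rf{C3} with the words ``Using this expression into \rf{Lk(NK)1}, we obtain the following consequences,'' and you have simply carried out that substitution with $n=2$, $k=1$, $\kappa=K_M(\pi)$, including the simplification $c_1H_1\|\nabla h\|^2-\g{P_1\nabla h}{\nabla h}=-\g{A\nabla h}{\nabla h}$. There is nothing to add.
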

\begin{corollary}
Let $\m^n$ be a spacelike hypersurface immersed into a RW spacetime with Riemannian fiber $M^n$ of constant sectional
curvature $\kappa$, and let $N$ and $h$ denote its Gauss map and height function, respectively. Then,
for every $k=0,\ldots,n-1$ we have
\begin{eqnarray*}
L_k(\g{N}{K}) & = & \bin{n}{k+1}\g{\nabla H_{k+1}}{K}+f'(h)c_kH_{k+1}\\
\nonumber {} & {} & +\bin{n}{k+1}\g{N}{K}(nH_1H_{k+1}-(n-k-1)H_{k+2}) \\
\nonumber {} & {} &
+\g{N}{K}\left(\frac{\kappa}{f^2(h)}-(\log f)''(h)\right)
\left(c_kH_k\|\nabla h\|^2-\g{P_k\nabla h}{\nabla h}\right).
\end{eqnarray*}
\end{corollary}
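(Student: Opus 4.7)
The plan is to derive this as an immediate specialization of Lemma \ref{LkNK} (i.e., formula \rf{Lk(NK)1}), using the constant sectional curvature assumption on the fiber $M^n$ to evaluate the term $\mathrm{tr}(P_k\circ\Ro_N)+\frac{f''(h)}{f(h)}c_kH_k$ that appears there. Indeed, formula \rf{Lk(NK)1} was derived in full generality (for any GRW spacetime), so the only task is to exhibit the curvature term in closed form under the extra assumption that $(M^n,\g{}{}_M)$ has constant sectional curvature $\kappa$.

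First I would recall the identity \rf{C2}, which rewrites
\[
\mathrm{tr}(P_k\circ\Ro_N)+\frac{f''(h)}{f(h)}c_kH_k
\]
as
\[
\sum_{i=1}^{n}\g{R_M(N^*,E_i^*)N^*}{P_kE_i}-(\log f)''(h)\bigl(c_kH_k\|\nabla h\|^2-\g{P_k\nabla h}{\nabla h}\bigr).
\]
This identity was obtained from \rf{curvatura} together with \rf{Nt^2} and $\nabla h=-\partial_t^\top$, and it holds for an arbitrary Riemannian fiber.

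Next I would exploit the constant sectional curvature hypothesis. Since $(M^n,\g{}{}_M)$ has constant sectional curvature $\kappa$, its curvature tensor takes the standard form, so
\[
(R_M(N^*,X^*)N^*)^\top=\kappa\,\g{N^*}{N^*}_M(X^*)^\top-\kappa\,\g{N^*}{X^*}_M(N^*)^\top.
\]
Substituting the tangential projections \rf{A1}, \rf{A3} and the fibre inner products \rf{B1}, \rf{B3} collapses this to
\[
(R_M(N^*,X^*)N^*)^\top=\frac{\kappa}{f^2(h)}\bigl(\|\nabla h\|^2X-\g{X}{\nabla h}\nabla h\bigr).
\]
Now taking $X=E_i$ and contracting with $P_kE_i$ for an orthonormal frame $\{E_i\}$ gives
\[
\sum_{i=1}^{n}\g{R_M(N^*,E_i^*)N^*}{P_kE_i}=\frac{\kappa}{f^2(h)}\bigl(c_kH_k\|\nabla h\|^2-\g{P_k\nabla h}{\nabla h}\bigr),
\]
where I have used $\mathrm{tr}(P_k)=c_kH_k$ from \rf{trPk}. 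Combining this with the expression above yields exactly \rf{C3}:
\[
\mathrm{tr}(P_k\circ\Ro_N)+\frac{f''(h)}{f(h)}c_kH_k=\Bigl(\frac{\kappa}{f^2(h)}-(\log f)''(h)\Bigr)\bigl(c_kH_k\|\nabla h\|^2-\g{P_k\nabla h}{\nabla h}\bigr).
\]

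Finally, I would plug this identity into \rf{Lk(NK)1}; the first three terms there are unchanged and the last term acquires the closed-form factor just computed, which produces precisely the claimed formula. There is no real obstacle in this argument; all the hard analytic work was done in Lemma \ref{LkNK}. The only mild care needed is bookkeeping of the tangential projections and the use of $\mathrm{tr}(P_k)=c_kH_k$ when contracting against $P_k$, but both steps are by now routine from the computations carried out earlier in this section.
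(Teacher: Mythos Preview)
Your proposal is correct and follows essentially the same route as the paper: derive identity \rf{C2} for general fibres, specialize the curvature term $(R_M(N^*,X^*)N^*)^\top$ using the constant sectional curvature of $M^n$ together with \rf{A1}, \rf{A3}, \rf{B1}, \rf{B3} to obtain \rf{C3}, and then substitute into \rf{Lk(NK)1}. There is nothing to add.
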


To proceed on and obtain a better expression of \rf{Lk(NK)1} in the most general case, let us analyze the summatory term in \rf{C2}. We will compute it by using a local orthonormal frame on \m\ that
diagonalizes $A$. It is worth pointing out that such a frame does not always exist; problems
occur when the multiplicity of the principal curvatures changes. For that
reason, we will work on the subset $\m_0$ of \m\ consisting of points at which the number
of distinct principal curvatures is locally constant, which is an open dense subset of
\m\ \cite[Paragraph 16.10]{Be}. Then, for every $p\in\m_0$ there exists a local orthonormal frame defined on a
neighbourhood of $p$ that diagonalizes $A$ and, hence, diagonalize $P_k$, that is,
$\{E_1, \ldots,E_n \}$ such that $A(E_i)=\kappa_iE_i$ and
$P_kE_i=\mu_{i,k}E_i$. Therefore, denoting by $K_{M}(N^*\wedge E_i^*)$ the sectional curvature
in $M^n$ of the 2-plane generated by $N^*$ and $E_i^*$, we find that
\[
\g{R_{M}(N^*,E_i^*)N^*}{P_kE_i}=
\frac{\mu_{i,k}}{f^2(h)}
K_{M}(N^*\wedge E_i^*)\|N^*\wedge E_i^*\|^2,
\]
where we are using the fact that
\[
\|N^*\wedge E_i^*\|^2=f^4(h)\|N^*\wedge E_i^*\|^2_{M}.
\]
Then we get the following.
\begin{corollary}
Let $\m^n$ be a spacelike hypersurface immersed into a GRW spacetime, with Gauss map $N$ and height function $h$. Then, for every $k=0,\ldots, n-1$ we have
\begin{eqnarray}
\label{Lk(NK)2}
L_k(\g{N}{K}) & = & \bin{n}{k+1}\g{\nabla H_{k+1}}{K}+f'(h)c_kH_{k+1}\\
\nonumber {} & {} & +\bin{n}{k+1}\g{N}{K}(nH_1H_{k+1}-(n-k-1)H_{k+2}) \\
\nonumber {} & {} & +
\frac{\g{N}{K}}{f^2(h)}\sum_{i=1}^{n}\mu_{i,k}K_{M}(N^*\wedge E_i^*)\|N^*\wedge E_i^*\|^2 \\
\nonumber {} & {} & -\g{N}{K}(\log f)''(h)\left(c_kH_k\|\nabla h\|^2-\g{P_k\nabla h}{\nabla h}\right).
\end{eqnarray}
\end{corollary}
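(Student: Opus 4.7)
The goal is to establish formula \rf{Lk(NK)2} starting from the general expression \rf{Lk(NK)1} given in Lemma\rl{LkNK}. Comparing the two formulas, the plan is to rewrite the single term
\[
\g{N}{K}\left(\mathrm{tr}(P_k\circ\Ro_N)+\frac{f''(h)}{f(h)}c_kH_k\right)
\]
that still involves the ambient curvature operator $\Ro_N$ in the more explicit form that appears in \rf{Lk(NK)2}. By \rf{C2}, this reduces to computing the trace $\sum_{i=1}^n\g{R_M(N^*,E_i^*)N^*}{P_kE_i}$ for a conveniently chosen orthonormal frame, since the other piece of \rf{C2} already matches the last line of \rf{Lk(NK)2}.

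The first step is to restrict attention to the open dense subset $\m_0\subset\m$ consisting of points where the number of distinct principal curvatures is locally constant (cf.\ \cite[Paragraph 16.10]{Be}). On a neighbourhood of each $p\in\m_0$ there exists a local orthonormal frame $\{E_1,\ldots,E_n\}$ of principal directions, so that $A(E_i)=\kappa_iE_i$ and, simultaneously, $P_k(E_i)=\mu_{i,k}E_i$. The second step is the algebraic identity
\[
\g{R_M(N^*,E_i^*)N^*}{P_kE_i}=\mu_{i,k}\g{R_M(N^*,E_i^*)N^*}{E_i},
\]
so the summand is controlled by the sectional curvature of $M^n$ on the 2-plane spanned by $N^*$ and $E_i^*$. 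Using the definition of sectional curvature, together with the scaling relation $\|N^*\wedge E_i^*\|^2=f^4(h)\|N^*\wedge E_i^*\|_M^2$ (which comes from the warped-product metric $\g{}{}=-dt^2+f^2\g{}{}_M$ and the fact that $N^*,E_i^*$ are vertical), one obtains
\[
\g{R_M(N^*,E_i^*)N^*}{P_kE_i}=\frac{\mu_{i,k}}{f^2(h)}K_M(N^*\wedge E_i^*)\|N^*\wedge E_i^*\|^2.
\]

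Summing over $i$ and plugging this into \rf{C2} yields
\[
\mathrm{tr}(P_k\circ\Ro_N)+\frac{f''(h)}{f(h)}c_kH_k=\frac{1}{f^2(h)}\sum_{i=1}^n\mu_{i,k}K_M(N^*\wedge E_i^*)\|N^*\wedge E_i^*\|^2-(\log f)''(h)\left(c_kH_k\|\nabla h\|^2-\g{P_k\nabla h}{\nabla h}\right),
\]
valid on $\m_0$. Substituting this back into \rf{Lk(NK)1} gives \rf{Lk(NK)2} pointwise on $\m_0$. The final step is to extend the identity from $\m_0$ to all of \m\ by continuity, since $\m_0$ is dense in \m\ and both sides of \rf{Lk(NK)2} are continuous functions on \m\ (the right-hand side being intrinsically defined, independently of the choice of local frame used to write it).

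The only subtle point, which I expect to be the main technical obstacle, is the existence of the simultaneously diagonalizing local frame: this requires restricting to $\m_0$ and then arguing by density. Everything else is a direct substitution combining Lemma\rl{LkNK}, the identity \rf{C2}, and the scaling property of the warped product metric.
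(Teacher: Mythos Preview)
Your proposal is correct and follows essentially the same approach as the paper: start from \rf{Lk(NK)1}, invoke \rf{C2}, restrict to the open dense set $\m_0$ where a local orthonormal frame of principal directions exists, and use the scaling identity $\|N^*\wedge E_i^*\|^2=f^4(h)\|N^*\wedge E_i^*\|_M^2$ to rewrite each summand in terms of the sectional curvature $K_M$. The paper states the formula on $\m_0$ without explicitly invoking the density/continuity argument you add at the end, but otherwise the two arguments coincide.
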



\section{Umbilicity of hypersurfaces in GRW spacetimes}
\label{s9}
When $k=0$, our formulas for the operator $L_0=\Delta$ acting on the functions $g(h)$ and $\g{N}{K}$ allow us to re-obtain the uniqueness result given by Montiel in \cite[Theorem 6]{Mo} for hypersurfaces with constant mean curvature.
\begin{theorem}
\label{th22}
Let \grw\ be a spatially closed GRW spacetime obeying the null convergence condition, that is,
satisfying
\beq
\label{NCCBIS}
\mathrm{Ric}_{M}\geq (n-1)\sup_I(ff''-f'^2)\g{}{}_{M},
\eeq
where $\mathrm{Ric}_{M}$ and $\g{}{}_{M}$ are respectively the Ricci and metric tensors of the compact Riemannian manifold
$M^n$.
Then the only compact spacelike hypersurfaces immersed into \grw\ with constant mean curvature are the embedded slices $\{t\}\times M^n$, $t\in I$, unless in the case where \grw\ is isometric to de Sitter spacetime in a neighborhood of \m, which must be a round umbilical hypersphere. The latter case cannot occur if we assume that the inequality in \rf{NCCBIS} is strict.
\end{theorem}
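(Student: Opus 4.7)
The plan is to derive an integral identity from $\Delta\g{N}{K}$ and combine it with the first Minkowski formula \rf{MF1}. Choose on \m\ its future-pointing Gauss map, so that $\g{N}{K}\leq-f(h)<0$ pointwise. Since $H$ is constant, $\nabla H\equiv 0$, and formula \rf{laplaciano} becomes
\[
\Delta\g{N}{K}=nf'(h)H+\g{N}{K}\|A\|^2+\g{N}{K}\bigl(\mathrm{Ric}_{M}(N^*,N^*)-(n-1)(\log f)''(h)\|\nabla h\|^2\bigr).
\]
Integrating over the compact \m\ makes the left-hand side vanish by the divergence theorem. Multiplying \rf{MF1} by $nH$ gives $nH\int_{\m}f'(h)\,d\m=-nH^2\int_{\m}\g{N}{K}\,d\m$, so the identity reorganises as
\[
\int_{\m}\g{N}{K}\,\Bigl(\|A\|^2-nH^2+\mathrm{Ric}_{M}(N^*,N^*)-(n-1)(\log f)''(h)\|\nabla h\|^2\Bigr)\,d\m=0.
\]

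Next I would verify that the two non-algebraic pieces of the integrand are non-negative. The Cauchy-Schwarz inequality applied to $\mathrm{tr}(A)=-nH$ yields $\|A\|^2-nH^2\geq 0$, with equality exactly at umbilical points. For the curvature piece, equation \rf{B1} gives $\g{N^*}{N^*}_{M}=\|\nabla h\|^2/f^2(h)$, and since $(\log f)''=(ff''-f'^2)/f^2$, the hypothesis \rf{NCCBIS} implies
\[
\mathrm{Ric}_{M}(N^*,N^*)-(n-1)(\log f)''(h)\|\nabla h\|^2\geq\frac{n-1}{f^2(h)}\bigl(\sup_I(ff''-f'^2)-(ff''-f'^2)(h)\bigr)\|\nabla h\|^2\geq 0.
\]
Since $\g{N}{K}<0$ throughout \m, the integrand is non-positive and therefore vanishes identically. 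In particular \m\ is totally umbilical, and the curvature bracket vanishes pointwise.

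Totally umbilical plus $H$ constant reduces the problem to the classification of such hypersurfaces in GRW spacetimes, for which I would invoke Montiel's result \cite[Theorem 5]{Mo}: either $\nabla h\equiv 0$ so \m\ is a slice $\{t\}\times M^n$, or a neighborhood of \m\ in \grw\ is isometric to de Sitter and \m\ is a round umbilical hypersphere. Finally, when the inequality \rf{NCCBIS} is strict, the curvature bracket is \emph{strictly} positive wherever $\nabla h\neq 0$; the identity then forces $\nabla h\equiv 0$ and \m\ must be a slice. The most delicate point is that we rely on the cited classification to rule out the de Sitter exception: equality in our NCC inequality does not, by itself, force the ambient metric to have constant sectional curvature, but combined with total umbilicity and $H$ constant it does via \cite[Theorem 5]{Mo}.
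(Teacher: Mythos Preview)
Your proof is correct and reaches the same conclusion through essentially the same computation as the paper: both reduce to showing that the bracket
\[
\|A\|^2-nH^2+\mathrm{Ric}_{M}(N^*,N^*)-(n-1)(\log f)''(h)\|\nabla h\|^2
\]
is non-negative and then forcing it to vanish. The only difference is in how the vanishing is obtained. The paper forms the auxiliary function $\phi=Hg(h)+\g{N}{K}$, uses \rf{Lk(g)} with $k=0$ together with \rf{laplaciano} to get $\Delta\phi=\g{N}{K}(\cdots)\leq 0$ \emph{pointwise}, and concludes via the maximum principle on the compact \m. You instead integrate $\Delta\g{N}{K}$ and cancel the $nf'(h)H$ term against the first Minkowski formula \rf{MF1}, obtaining the same expression inside an integral and concluding by sign-definiteness of the integrand. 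Both routes are standard and equivalent here; the paper's pointwise version has the mild advantage of not needing to invoke \rf{MF1} separately (the Minkowski formula is, in effect, absorbed into $\Delta(Hg(h))$), while your integral version avoids introducing the auxiliary function $\phi$.
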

\begin{proof}
Let us choose on \m\ the future-pointing Gauss map $N$ and let us consider the function $\phi=Hg(h)+\g{N}{K}\in\mathcal{C}^\infty(\m)$. Since $H$ is constant, from \rf{Lk(g)} and \rf{laplaciano} we have that
\beq
\label{laplaciano2}
\Delta\phi=\g{N}{K}\left(\|A\|^2-nH^2+\mathrm{Ric}_{M}(N^*,N^*)-(n-1)(\log f)''(h)\|\nabla h\|^2\right).
\eeq
Observe that, by Cauchy-Schwarz inequality, $\|A\|^2-nH^2\geq 0$ on \m, with equality at umbilical points. Moreover,
from \rf{NCCBIS} and \rf{B1} we also get
\[
\mathrm{Ric}_{M}(N^*,N^*)-(n-1)(\log f)''(h)\|\nabla h\|^2\geq 0.
\]
Therefore, since $\g{N}{K}<0$ on \m, by \rf{laplaciano2} we get that $\phi$ is superharmonic on \m. But \m\ being compact we have that $\phi$ must be constant. Hence $\Delta\phi=0$, and $\|A\|^2-nH^2$ vanishes on \m, which means
that \m\ is totally umbilical, and
\beq
\label{equality}
\mathrm{Ric}_{M}(N^*,N^*)-(n-1)(\log f)''(h)\|\nabla h\|^2=0 \quad \mathrm{on} \quad \m.
\eeq
If the inequality in \rf{NCCBIS} is strict, \rf{equality} is equivalent to $N^*(p)=0$ at every $p\in\m$, that is, $\|\nabla h(p)\|=0$ at every $p\in\m$. Then, $h$ must be constant and  \m\ is a slice. In the general case, we obtain that \m\ is totally umbilical and has constant mean curvature in \grw. Then, the result follows by observing
that the case where the hypersurface is totally umbilical, but not a slice, can only occur if \grw\ is locally
a de Sitter spacetime and \m\ is a round umbilical hypersphere (see the classification of compact spacelike hypersurfaces with constant mean curvature given by Montiel in \cite[Theorem 5]{Mo}).
\end{proof}

Our objective now is to extend the reasoning above to the case of hypersurfaces with constant higher order mean
curvature, using our general formulas for the operators $L_k$ acting on the functions $g(h)$ and $\g{N}{K}$. Specifically, we will prove the following uniqueness result, which extends Theorem\rl{thbis} to the case of GRW ambient spacetimes. Here, instead of the null convergence condition we need to impose on \grw\ the following stronger condition
\beq
\label{NCCstrong}
K_M\geq\sup_I(ff''-f'^2),
\eeq
where $K_M$ stands for the sectional curvature of $M^n$.
We will refer to \rf{NCCstrong} as the strong null convergence condition.
\begin{theorem}
\label{thfinal}
Let \grw\ be a spatially closed GRW spacetime obeying the strong null convergence condition, with $n\geq 3$.
Assume that $\m^n$ is a compact spacelike hypersurface immersed into \grw\ which is contained in a slab $\Omega(t_1,t_2)$
on which $f'$ does not vanish. If $H_{k}$ is constant, with $2\leq k\leq n$ then \m\ is totally umbilical. Moreover,
\m\ must be a slice $\{t_0\}\times M^n$ (necessarily with $f'(t_0)\neq 0$), unless in the case where \grw\ has positive
constant sectional curvature and \m\ is a round umbilical hypersphere.
The latter case cannot occur if we assume that the inequality in \rf{NCCstrong} is strict.
\end{theorem}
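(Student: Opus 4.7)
The plan is to extend the strategy of Theorem\rl{th22} to higher order, replacing the Laplacian by the elliptic operator $L_{k-1}$ and applying the maximum principle to an auxiliary function
\[
\phi = \alpha\, g(h) + \g{N}{K}\in\mathcal{C}^\infty(\m),
\]
where $\alpha = H_k^{1/k}$ is a carefully chosen constant so that all contributions to $L_{k-1}\phi$ have the same sign.

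I would first use Lemma\rl{lemaellipticpoint} to produce an elliptic point $p_0\in\m$, choosing the future-pointing Gauss map $N$ when $f'>0$ on the slab and the past-pointing one when $f'<0$. At $p_0$ all principal curvatures are negative, so $H_k(p_0)>0$, and by constancy $H_k>0$ on all of \m. Positive definiteness of $P_j$ for every $1\leq j\leq k-1$ then follows from Lemma\rl{lemafernanda} when $k=2$ and from Lemma\rl{lemaCR} when $k\geq 3$, so $L_{k-1}$ is elliptic. Exactly as in the proof of Theorem\rl{thbis}, the Garding inequalities yield
\[
H_1 \geq H_2^{1/2} \geq \cdots \geq H_k^{1/k} > 0 \quad \mathrm{on}\ \m,
\]
with equality at any stage only at umbilical points, together with Newton's inequalities $H_{j-1}H_{j+1}\leq H_j^2$.

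Next I would compute $L_{k-1}\phi$ by combining \rf{Lk(g)} with $k-1$ in place of $k$ and formula \rf{Lk(NK)2}. Using $\g{\nabla H_k}{K}=0$, the identity $c_{k-1}=k\bin{n}{k}$, and the algebraic rearrangement
\[
nH_1H_k - (n-k)H_{k+1} - k\alpha H_k = (n-k)(H_1H_k - H_{k+1}) + kH_k(H_1 - \alpha),
\]
one finds
\begin{eqnarray*}
L_{k-1}\phi & = & c_{k-1}f'(h)\bigl(H_k - \alpha H_{k-1}\bigr) \\
{} & {} & +\bin{n}{k}\g{N}{K}\bigl[(n-k)(H_1H_k - H_{k+1}) + kH_k(H_1 - \alpha)\bigr] \\
{} & {} & +\g{N}{K}\,\mathcal{C},
\end{eqnarray*}
where $\mathcal{C}$ denotes the remaining curvature contribution. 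By Garding, $\alpha H_{k-1}\geq\alpha^k=H_k$, so $H_k - \alpha H_{k-1}\leq 0$; by Garding and Newton, both brackets in the second line are non-negative. A direct sign check shows that $f'$ and $\g{N}{K}$ have opposite signs for the chosen $N$ in either subcase, so the first two lines of the formula for $L_{k-1}\phi$ share a common sign.

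The remaining task is to analyse $\mathcal{C}$. Working on the open dense set where $A$ has locally constant multiplicities and choosing a local frame $\{E_1,\ldots,E_n\}$ diagonalizing $A$ (and hence $P_{k-1}$) with eigenvalues $\mu_{i,k-1}>0$, formula \rf{C2} together with the identity
\[
\|N^*\wedge E_i^*\|^2 = \sum_{j\neq i}\g{E_j}{\nabla h}^2,
\]
(derived from \rf{A1}--\rf{B3} and $\g{N}{\partial_t}^2=1+\|\nabla h\|^2$) reduces $\mathcal{C}$ to
\[
\sum_i\mu_{i,k-1}\Bigl(\sum_{j\neq i}\g{E_j}{\nabla h}^2\Bigr)\left[\frac{K_M(N^*\wedge E_i^*)}{f^2(h)} - (\log f)''(h)\right].
\]
Strong null convergence \rf{NCCstrongbis}, together with $f^2(\log f)''=ff''-f'^2$, makes each bracket non-negative, so $\mathcal{C}\geq 0$. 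Hence every term in $L_{k-1}\phi$ has the same sign, i.e.\ $L_{k-1}\phi$ has constant sign on the compact \m. Ellipticity of $L_{k-1}$ and the classical maximum principle then force $\phi$ to be constant and every summand to vanish identically. Vanishing of the $H_k(H_1-\alpha)$ term gives $H_1=H_k^{1/k}$, i.e.\ equality at every stage of Garding, so \m\ is totally umbilical; then $H_1$ is also constant, and Montiel's classification \cite[Theorem 5]{Mo} yields the stated dichotomy. When \rf{NCCstrongbis} is strict, the vanishing of $\mathcal{C}$ together with $\mu_{i,k-1}>0$ and $n\geq 3$ forces $\g{E_j}{\nabla h}=0$ for every $j$, hence $\nabla h\equiv 0$ and \m\ must be a slice. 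The most delicate points will be the consistent sign bookkeeping across the two subcases $f'>0$ and $f'<0$, and the rearrangement of the $\g{N}{K}$-coefficient into two non-negative Garding/Newton pieces.
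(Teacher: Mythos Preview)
Your proposal is correct and follows essentially the same strategy as the paper's proof: the same auxiliary function $\phi=H_k^{1/k}g(h)+\g{N}{K}$, the same combination of \rf{Lk(g)} and \rf{Lk(NK)2}, the same Garding/Newton sign analysis, and the same appeal to ellipticity of $L_{k-1}$ via Lemmas\rl{lemafernanda},\rl{lemaCR} and\rl{lemaellipticpoint}. The only cosmetic differences are that the paper assumes $f'>0$ without loss of generality rather than treating both signs, bounds the curvature term $\Theta$ via $\alpha=\sup_I(ff''-f'^2)$ rather than combining it into a single sum as you do with $\mathcal{C}$, reads off umbilicity from the vanishing of $H_k-H_k^{1/k}H_{k-1}$ rather than $H_k(H_1-H_k^{1/k})$, and finishes by invoking Theorem\rl{th22} (constant $H_1$) instead of quoting Montiel's classification directly and arguing the strict case from $\mathcal{C}=0$.
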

\begin{proof}
We may assume without loss of generality that $f'(h)>0$ on \m, and choose on \m\ the future-pointing Gauss map $N$. Then, we know from Lemma\rl{lemaellipticpoint} that there exists a point $p_0\in\m$ where all the principal curvatures (with respect to the chosen orientation) are negative. In particular, the constant $H_k$ is positive and we can consider the function
$\phi=H_k^{1/k}g(h)+\g{N}{K}\in\mathcal{C}^\infty(\m)$. Since $H_k$ is constant, from \rf{Lk(g)} and \rf{Lk(NK)2} we have
that
\begin{eqnarray}
\label{in5}
L_{k-1}\phi & = & k\bin{n}{k}(H_k-H_k^{1/k}H_{k-1})f'(h)\\
\nonumber {} & {} & +\bin{n}{k}\g{N}{K}(nH_1H_{k}-(n-k)H_{k+1}-kH_k^{(k+1)/k}) \\
\nonumber {} & {} & +\g{N}{K}\Theta,
\end{eqnarray}
where
\begin{eqnarray*}
\Theta & = & \frac{1}{f^2(h)}\sum_{i=1}^{n}\mu_{i,k-1}K_{M}(N^*\wedge E_i^*)\|N^*\wedge E_i^*\|^2\\
{} & {} & -(\log f)''(h)\left(c_{k-1}H_{k-1}\|\nabla h\|^2-\g{P_{k-1}\nabla h}{\nabla h}\right),
\end{eqnarray*}
and $P_{k-1}E_i=\mu_{i,k-1}E_i$ for every $i=1,\ldots,n$.

Since \m\ has an elliptic point, from the proofs of Theorem\rl{thbis} and Theorem\rl{thbiss}, respectively,  we know that inequalities \rf{garding} and \rf{garding2} hold on \m\ for every $k$ (inequality \rf{garding2} was derived in the proof of Theorem\rl{thbiss}, where we assumed that $k\leq n-1$; however \rf{garding2} trivially holds also for $k=n$ since $H_{n+1}=0$ by definition). From \rf{garding} we get that
\beq
\label{in1}
H_k-H_k^{1/k}H_{k-1}=H_k^{1/k}(H_k^{(k-1)/k}-H_{k-1})\leq 0
\eeq
on \m, with equality if and only if \m\ is totally umbilical. Using \rf{garding2} we also get that
\beq
\label{in2}
nH_1H_{k}-(n-k)H_{k+1}-kH_k^{(k+1)/k}\geq kH_k(H_1-H_k^{1/k})\geq 0
\eeq
on \m. On the other hand, Lemma\rl{lemafernanda} (when $k=2$) and Lemma\rl{lemaCR} (when $k\geq 3$) apply here and imply that the operator $L_{k-1}$ is elliptic or, equivalently, $P_{k-1}$ is positive definite. In particular, its eigenvalues
$\mu_{i,k-1}$ are all positive on \m, and from \rf{NCCstrong} we have
\beq
\label{in3}
\mu_{i,k-1}K_{M}(N^*\wedge E_i^*)\|N^*\wedge E_i^*\|^2\geq
\mu_{i,k-1}\alpha\|N^*\wedge E_i^*\|^2
\eeq
for every $i=1,\ldots,n$, where we are writing $\alpha=\sup_I(ff''-f'^2)$ for simplicity. From the decompositions
\[
N=N^*-\g{N}{\partial_t}\partial_t, \quad
E_i=E_i^*-\g{E_i}{\partial_t}\partial_t, \quad \mathrm{and} \quad
\partial_t=-\nabla h-\g{N}{\partial_t}N,
\]
it follows easily that
\[
\|N^*\wedge E_i^*\|^2=\|\nabla h\|^2-\g{E_i}{\nabla h}^2.
\]
Therefore, \rf{in3} implies
\begin{eqnarray*}
\sum_{i=1}^{n}\mu_{i,k-1}K_{M}(N^*\wedge E_i^*)\|N^*\wedge E_i^*\|^2 & \geq &
\alpha\left(\mathrm{tr}(P_{k-1})\|\nabla h\|^2-\sum_{i=1}^{n}\mu_{i,k-1}\g{E_i}{\nabla h}^2\right) \\
{} & = & \alpha\left(c_{k-1}H_{k-1}\|\nabla h\|^2-\g{P_{k-1}(\nabla h)}{\nabla h}\right),
\end{eqnarray*}
and then
\beq
\label{in4}
\Theta\geq
\left(\frac{\alpha}{f^2(h)}-(\log f)''(h)\right)
\left(c_{k-1}H_{k-1}\|\nabla h\|^2-\g{P_{k-1}(\nabla h)}{\nabla h}\right)\geq 0.
\eeq
For the last inequality, observe that $\alpha/f^2(h)-(\log f)''(h)\geq 0$ holds trivially, and also observe that
\begin{eqnarray*}
c_{k-1}H_{k-1}\|\nabla h\|^2-\g{P_{k-1}(\nabla h)}{\nabla h} & = &
\mathrm{tr}(P_{k-1})\|\nabla h\|^2-\g{P_{k-1}(\nabla h)}{\nabla h}\geq 0
\end{eqnarray*}
holds because $P_{k-1}$ is positive definite.

Summing up, using \rf{in1}, \rf{in2} and \rf{in4}, and taking into account that $f'(h)>0$ and $\g{N}{K}<0$, we obtain from \rf{in5} that $L_{k-1}\phi\leq 0$ on \m. Since $L_{k-1}$ is an elliptic operator on the Riemannian manifold \m, which is compact, we have, by the maximum principle, that $\phi$ must be constant. Hence,
$L_{k-1}\phi=0$ and the three terms in \rf{in5} vanish on \m. In particular, \rf{in1} is an equality and \m\ is a totally umbilical hypersurface. Since $H_k$ is a positive constant and \m\ is totally umbilical, we have that all the higher order mean curvatures are constant. In particular, $H_1$ is a positive constant and the result follows by Theorem\rl{th22}.
\end{proof}

\section*{Acknowledgements}
This work was started while the first author was visiting the Departamento de
Matem\'{a}tica of the Universidade Federal do Cear\'{a}, Fortaleza, Brazil. It was finished while the first author was visiting the Institute des Hautes \'{E}tudes Scientifiques (IH\'{E}S) in Bures-sur-Yvette, France, and the second author was
visiting the Institut de Mathematiques de Jussieu in Paris, France. The authors would like
to thank those institutions for their hospitality.


\bibliographystyle{amsplain}

\begin{thebibliography}{10}

%
\bibitem{AAR} J.A. Aledo, L.J. Al\'\i as and A. Romero, \textit{Integral formulas for compact
spacelike hypersurfaces in de Sitter space. Applications to the case of
constant higher order mean curvature} J. Geom. Phys. {\bf 31} (1999),
195--208.
%
%
\bibitem{ABC} L.J. Al\'\i as, A. Brasil Jr. and A.G. Colares, \textit{Integral formulae for spacelike hypersurfaces in conformally stationary spacetimes and applications}, Proc. Edinburgh Math. Soc. {\bf 46} (2003), 465--488.

\bibitem{AM} L.J. Al\'\i as and S. Montiel, \textit{Uniqueness of spacelike hypersurfaces with constant mean curvature in generalized Robertson-Walker spacetimes}.  Differential geometry, Valencia, 2001,  59--69, World Sci. Publishing, River Edge, NJ, 2002.

\bibitem{ARS1} L.J. Al\'\i as, A. Romero and M. S\'{a}nchez, \textit{Uniqueness
of complete spacelike hypersurfaces of constant mean curvature in
Generalized Robertson-Walker spacetimes}, Gen. Relativity Gravitation {\bf
27} (1995), 71--84.

\bibitem{ARS2} L.J. Al\'\i as, A. Romero and M. S\'{a}nchez, \textit{Spacelike hypersurfaces of constant mean
curvature and Calabi-Bernstein type problems}, T$\hat{\rm o}$hoku Math. J.
{\bf 49} (1997), 337--345.

\bibitem{ARS3} L.J. Al\'\i as, A. Romero and M. S\'{a}nchez, \textit{Spacelike hypersurfaces of constant mean
curvature in certain spacetimes}, Nonlinear Anal. {\bf 30} (1997),
655--661.

\bibitem{Be} A.L. Besse, \textit{Einstein Manifolds}, Springer-Verlag, Berlin 1987.
%
%
\bibitem{BC} J.L. Barbosa and and A.G. Colares, \textit{Stability of hypersurfaces with constant $r$-mean curvature},
Ann. Global Anal. Geom.  {\bf 15}  (1997),  277--297.


\bibitem{B} R. Bartnik, \textit{Existence of maximal surfaces in asymptotically flat spacetimes}, Comm. Math. Phys.
\textbf{94} (1984), 155--175.

\bibitem{BCM} R. Bartnik, P.T. Chrusciel and  N. \'{O} Murchadha,\textit{On maximal surfaces in asymptotically flat space-times}, Comm. Math. Phys. \textbf{130} (1990), 95--109.

\bibitem{BS} R. Bartnik and L. Simon, \textit{Spacelike hypersurfaces with prescribed boundary values and mean curvature}, Comm. Math. Phys. \textbf{87}  (1982/83), 131--152.

\bibitem{BF} D. Brill and F. Flaherty,
\textit{Isolated maximal surfaces in spacetime}, Comm. Math. Phys.
\textbf{50} (1976), 157--165.

\bibitem{Ca} E. Calabi, \textit{Examples of Bernstein problems for some nonlinear equations}.
1970  Global Analysis (Proc. Sympos. Pure Math., Vol. XV, Berkeley, Calif., 1968)  pp. 223--230 Amer. Math. Soc., Providence, R.I.


\bibitem{CR} X. Cheng and H. Rosenberg, \textit{Embedded positive constant $r$-mean cuvature hypersurfaces in $M^m\times R$},
An. Acad. Brasil. Ci\^{e}nc. {\bf 77}  (2005), 183--199.

\bibitem{CY} S.Y. Cheng and S.T. Yau,
\textit{Maximal space-like hypersurfaces in the Lorentz-Minkowski spaces},
Ann. of Math. (2) {\bf 104} (1976), 407--419.

\bibitem{CB1} Y. Choquet-Bruhat, \textit{Quelques propri\'{e}t\'{e}s des sousvari\'{e}t\'{e}s maximales d'une vari\'{e}t\'{e} lorentzienne}, C. R. Acad. Sci. Paris S\'{e}r. A-B  \textbf{281}  (1975),  A577--A580.

\bibitem{CB2} Y. Choquet-Bruhat, \textit{Maximal submanifolds and submanifolds with constant mean extrinsic curvature of a Lorentzian manifold}  Ann. Scuola Norm. Sup. Pisa Cl. Sci. (4)  \textbf{3} (1976),  361--376.

\bibitem{CFM} Y. Choquet-Bruhat, A.E. Fischer and J.E. Marsden, \textit{Maximal hypersurfaces and positivity of mass}, in
Isolated Gravitating Systems in General Relativity. Proceedings of the International School of Physics "Enrico Fermi"
(J. Ehlers, ed.), North-Holland Publishing Co., Amsterdam, 1979, p. 396-456

\bibitem{CBY} Y. Choquet-Bruhat and J.W. York Jr. \textit{The Cauchy problem}, in
General relativity and gravitation, Vol. 1, pp. 99--172, Plenum, New York-London, 1980.

\bibitem{El} F. Elbert, \textit{Constant positive 2-mean curvature hypersurfaces},
Illinois J. Math.  {\bf 46}  (2002), 247--267.

\bibitem{Ga} L. G\.{a}rding, \textit{An inequality for hyperbolic polynomials},
{J. Math. Mech.} {\bf 8} (1959), 957--965.

\bibitem{Ge} C. Gerhardt, \textit{$H$-surfaces in Lorentzian manifolds}, Comm. Math. Phys.
\textbf{89} (1983), 523--553.

\bibitem{Go} A.J. Goddard, \textit{Some remarks on the existence of spacelike hypersurfaces of
constant mean curvature}, {Math. Proc. Cambridge Phil. Soc.} {\bf 82}
(1977), 489--495.

\bibitem{HLP} G. Hardy, J.E. Littlewood and G. P\'{o}yla,
\textit{Inequalities}, 2nd. edition, Cambridge Mathematical Library,
Cambridge, 1989.
%
%
\bibitem{Hs} C.C. Hsiung, \textit{Some integral formulas for closed hypersurfaces},
{Math. Scand.} {\bf 2} (1954), 286--294.
%
%
%
%
%

\bibitem{MT} J.E. Marsden and F.J. Tipler,
\textit{Maximal hypersurfaces and foliations of constant mean curvature in general relativity}, Phys. Rep.
\textbf{66} (1980), 109--139.

\bibitem{Mo1} S. Montiel, \textit{An integral inequality for compact spacelike hypersurfaces in
de Sitter space and applications to the case of constant mean curvature},
{Indiana Univ. Math. J.} {\bf 37} (1988), 909--917.

\bibitem{Mo} S. Montiel, \textit{Uniqueness of spacelike hypersurfaces of constant mean curvature in
foliated spacetimes}, Math. Ann. {\bf 314} (1999), 529--553.

\bibitem{MR} S. Montiel and A. Ros, \textit{Compact hypersurfaces: The Alexandrov theorem for
higher order mean curvatures}, in {\it Differential Geometry} (ed. B.
Lawson and K. Tenenblat), pp. 279--296, Longman, Essex 1991

\bibitem{ONe} B. O'Neill, \textit{Semi-Riemannian geometry with applications to
relativity}, Academic Press, New York, 1983.

\bibitem{Re} R.C. Reilly, \textit{Variational properties of functions of the mean curvature for
hypersurfaces in space forms}, J. Differential Geom. {\bf 8} (1973),
465-477.

%
%
%
%
\bibitem{Ro}  H. Rosenberg, \textit{Hypersurfaces of constant curvature in space forms},
Bull. Sc. Math. {\bf 117} (1993), 211--239.

%
%
%
\bibitem{St} S. Stumbles, \textit{Hypersurfaces of constant mean extrinsic curvature}
Ann. Physics \textbf{133} (1981),  28--56.


%
%
%
\end{thebibliography}

\end{document}